\newtheorem{assumption}[theorem]{Assumption}
\newcommand{\thmref}[1]{Theorem~\ref{#1}}
\newcommand{\secref}[1]{Section~\ref{#1}}
\newcommand{\lemref}[1]{Lemma~\ref{#1}}
\newcommand{\lemrefs}[2]{Lemmas~\ref{#1} and ~\ref{#2}}
\newcommand{\propref}[1]{Proposition~\ref{#1}}
\newcommand{\proprefs}[2]{Propositions~\ref{#1} and ~\ref{#2}}
\newcommand{\assref}[1]{Assumption~\ref{#1}}
\newcommand{\figref}[1]{Figure~\ref{#1}}
\newcommand{\tabref}[1]{Table~\ref{#1}}
\newcommand{\eq}[1]{\begin{eqnarray}\label{#1}}
\newcommand{\qe}{\end{eqnarray}}
\newcommand{\be}{\begin{eqnarray}}
\newcommand{\ee}{\end{eqnarray}}
\newcommand{\bal}{\begin{aligned}}
\newcommand{\eal}{\end{aligned}}
\newcommand{\bes}{\begin{eqnarray*}}
\newcommand{\ees}{\end{eqnarray*}}
\newcommand{\bs}{\begin{subeqnarray}}
\newcommand{\es}{\end{subeqnarray}}
\newcommand{\bss}{\begin{subeqnarray*}}
\newcommand{\ess}{\end{subeqnarray*}}
\newcounter{saveeqn}
\def\hat{\widehat}
\def\diam{\operatorname{diam}}
\def\mcF{\mathcal F}
\def\Oh{\mathcal O}
\def\mbbR{\mathbb R}
\def\p{\partial}
\def\O{\Omega}
\def\NChz{{\mathcal NC}^h_0}
\def\NChz2d{{[\mathcal{NC}}^h_0]^2}
\def\tNChz2d{\widetilde {[\mathcal{NC}}^h_0]^2}
\def\oa{\overline{a}}
\def\ou{\overline{u}}
\def\vep{\varepsilon}
\def\G{\Gamma}
\def\and{\quad\text{and}\quad}
\def\tH{\widetilde H}
\def\<{\left\langle}
\def\>{\right\rangle}
\def\mbA{\mathbf A}
\def\mbJ{\mathbf J}
\def\mbW{\mathbf W}
\def\be{\mathbf e}
\def\b1{\mathbf 1}
\def\Tau{{\mathcal T}}
\def\bnu{{\boldsymbol \nu}}
\def\grad{\nabla}
\def\div{\nabla\cdot}
\def\mbe{\mathbf e}
\def\mbx{\mathbf x}
\def\mby{\mathbf y}
\def\dx{\kern0.1ex\operatorname{d\mbx}}
\def\dy{\kern0.1ex\operatorname{d\mby}}
\def\ds{\kern0.1ex\operatorname{ds}}
\def\dsig{\kern0.1ex \operatorname{d}\kern-0.5ex\boldsymbol{\sigma}}
\def\dhsig{\kern0.1ex \operatorname{d}\kern-0.5ex\hat{\boldsymbol{\sigma}}}
\def\dhatx{\kern0.1ex\operatorname{d\hat{\mbx}}}
\def\dhats{\kern0.1ex\operatorname{d\hat{s}}}
\def\forany{\quad\forall}
\newcommand{\vertiii}[1]{{\left\vert\kern-0.25ex\left\vert\kern-0.25ex\left\vert #1 \right\vert\kern-0.25ex\right\vert\kern-0.25ex\right\vert}}
\newcommand{\brknmH}[1]{{\vertiii {#1}}_H}
\newcommand{\brknmh}[1]{{\vertiii {#1}}_{h,K_{\delta,i}}}
\newcommand*{\avint}{\mathop{\ooalign{$\int$\cr$-$}}}
\crefname{hypothesis}{Hypothesis}{Hypotheses}
\begin{document}
\markboth{$P_1$--nonconforming quadrilateral element for HMM}{Yim, Sheen \& Sim}
\title{$P_1$--Nonconforming Quadrilateral Finite Element Space with
  Periodic Boundary Conditions:\\
Part II. Application to the
  Nonconforming Heterogeneous Multiscale Method}
\author{Jaeryun Yim\thanks{
215 Bongeunsa-ro, Gangnam-gu, Seoul 06109, Korea.
\email{jaeryun.yim@gmail.com}}
\and
Dongwoo Sheen
\thanks{Department of Mathematics, Seoul National University, Seoul 08826, Korea.
  \email{sheen@snu.ac.kr}}
\and Imbo Sim\thanks{\small Department of Mechanical Engineering, Dong-A University, 37 Nakdong-daero 550
  beong-gil, Saha-gu, Busan 49315, South
   Korea.
\email{imbosim@dau.ac.kr}}}

\maketitle

\allowdisplaybreaks

\newif\iflong
\longfalse

\begin{abstract}
A homogenization approach is one of effective strategies to solve
multiscale elliptic problems approximately. The finite element
heterogeneous multiscale method (FEHMM) which is based on the finite
element makes possible to simulate such process numerically. 
In this paper we introduce a FEHMM scheme for multiscale elliptic
problems based on nonconforming spaces. In particular we use the
noconforming element with the periodic boundary condition introduced
in the companion paper.
Theoretical analysis derives a priori error estimates in the standard Sobolev norms. Several numerical results which confirm our analysis are provided.
\end{abstract}

\begin{keywords} Heterogeneous multiscale method; nonconforming finite
  element method; homogenization
\end{keywords}

\begin{AMS}
  65N30
\end{AMS}

\section{Introduction}
The finite element method (FEM) is one of the successful methods to
approximate
the solution of partial differential equations derived in various fields of studies.
However it has a drawback when we treat a problem containing coefficient tensors with heterogeneity.
For instance, when the coefficient is highly oscillatory in micro scale, it is necessary to consider a sufficiently refined mesh
consisting of elements comparable with the micro scale
in order to get a numerical solution which is sufficiently close to the exact solution.
Such a refinement increases the number of unknowns in the system of
equations we set to solve, which is a critical burden for
  numerical simulation.
In these decades, several efficient multiscale methods have been
  developed to overcome this shortage of the standard FEM. Among them,
  we refer to numerical homogenization \cite{babuska1976homogenization, babuska1994special, efendiev2004numerical, engquist2008asymptotic}, 
MsFEM (multiscale finite element methods) and GMsFEM (generalized MsFEM)
\cite{efendiev2009multiscale, efendiev2013generalized, efendiev2014generalized, efendiev2015spectral, hou1997multiscale,
  hou1999convergence}, VMS (variational multiscale finite element methods) \cite{hughes1998variational},
MsFVM (multiscale finite volume methods) \cite{jenny2003multiscale} and HMM (heterogeneous multiscale
methods) \cite{abdulle2012heterogeneous, e2003heterogeneous}.

Most of the works cited above are based on the conforming finite
element framework, while nonconforming finite elements have
prominence for their numerical stability 
especially in solving incompressible fluid flows 
\cite{cdssy, cdy, crouzeix1973conforming, kim-yim-sheen, rannacher-turek, turek},
nearly incompressible elastic materials \cite{brenner-sung-elasticity,
  lls-nc-elast}, and biharmonic problems \cite{morley, Hu-Shi2009,
  Hu-Shi-Xu2011, mao-shi-high-morley, park-sheen-morley,
  shi-xie-morley-aniso}. 
We remark that the first nonconforming analysis in multiscale methods
has been done by Efendiev {\it et al.} in \cite{efendiev2000convergence} in order to analyze
the nonconforming nature where conforming finite elements were adopted
with oversampling. Recently Lozinski {\it et al.} began to develop
the idea of using the Crouzeix--Raviart element \cite{crouzeix1973conforming} in multiscale
finite element methods intensively for elliptic and Stokes
problems \cite{bris2013msfem, lozinski2013methode, bris2014msfem}. Especially it has been shown that in applying the
Crouzeix--Raviart nonconforming
multiscale finite element for perforated domains the
use of carefully--chosen extra bubble elements improves stability and efficiency
substantially \cite{muljadi2015nonconforming, degond2015crouzeixraviart}.
In the generalized MsFEM direction, Lee and Sheen
\cite{lee-sheen-ncgmsfem} introduced the general
framework for applying the nonconforming concept to elliptic
problems.

In the introduction of the FEHMM (Finite Element HMM), the standard
(conforming) finite element spaces have been employed for the micro and macro function
space (see, for instance, \cite{abdulle2009finite}, and the references therein), various types of finite element
spaces being adopted since then.
The discontinuous Galerkin (DG) approach is introduced to impose the
continuity of numerically homogenized jump across adjacent faces \cite{abdulle2012discontinuous}.
A posteriori error estimates and corresponding adaptive approaches for
the FEHMM are proposed \cite{abdulle2011adaptive}.
The generalized FEHMM approach by introduction of the on-line and
off-line space \cite{abdulle2012reduced}.


The aim of this
paper is to attempt to apply the nonconforming approach to the
primitive HMM for elliptic problems
rigorously, leaving the extension of the nonconforming approach as future works
to generalized HMMs and to more practical fluid and solid
mechanical problems, where the numerical stability is easily
achievable owing to the nature of nonconformity of elements.
In particular, in this paper we employ the 
$P_1$--NC quadrilateral element \cite{park-sheen-p1quad,
 cpark-thesis}, which will share the same nature as the lowest--order 
Crouzeix--Raviart element since it contains only linear polynomials in
each rectangles or hexahedrons. 
We would like to highlight that the use of rectangular--type element
has an extra advantage over the simplicial elements in the application
of multiscale methods, not to mention the simplicity of constructing
elements in three dimension, but the suitability of rectangular
type elements in dealing with periodicity.
In addition, by employing the $P_1$--NC quadrilateral element
  we do not need any additional linearization step for macro constraint
  functions
  in solving micro problems on sampling domains, since
  the $P_1$--NC quadrilateral element is already linear.
  This is another main advantage of our approach over using
 the conforming bilinear element. See Remark \ref{rem:no-linearization}.

This paper is organized as follows. In \secref{sec:preliminary} we
state in brief preliminaries and notations to be used later.
We introduce a nonconforming finite element heterogeneous multiscale method based on nonconforming spaces in \secref{sec:fehmm-nc}.
\secref{sec:analysis} is devoted to prove the main theorems for a
priori error estimates of our proposed method.
We give several numerical examples and results in \secref{sec:numerical-result}.

\section{Preliminaries}\label{sec:preliminary}

Let $\O \subset \mbbR^d, d =2,3,$ be a bounded domain with $C^\infty$ boundary
$\p \O$. Denote $\mbx=(x_1, \cdots, x_d)^t \in \mbbR^d$. Consider a multiscale elliptic problem
\begin{subequations}\label{eq:multiscale-elliptic}
\begin{align}
-\div \Big( \mbA^{\vep}({\bf x}) \grad u^{\vep}({\bf x}) \Big) &= f({\bf x})
\quad \mbox{in } \O, \\
u^{\vep} &= 0 \quad \quad \ \mbox{on } \p \O,
\end{align}
\end{subequations}
where $\vep \ll 1$ is a scale parameter. Here, the coefficient tensor $\mbA^\vep \in
[L^\infty(\O)]^{d\times d}$ is assumed to be symmetric, uniformly elliptic and bounded, {\em i.e.}, there exist
$\lambda>0$ and $\Lambda>0$ which do not depend on ${\bf x}$ such that
$\lambda |\xi|^2  \le \xi^t \mbA^{\vep}({\bf x}) \ \xi\le \Lambda
|\xi|^2$
for all $\xi \in \mbbR^d$.

%
\newpage
\subsection{Homogenization}
Denote by $Y = \prod_{k=1}^d [0,\ell_k]$ be a period cell for fixed
 $0<\ell_k,k=1,\cdots,d.$
Let $\mbA(\cdot,\cdot):\mbbR^d \times \mbbR^d \rightarrow
\mbbR^{d\times d}$ be a 
$Y$-periodic function with respect to the second variable, {\it i.e.},
$\mbA({\mbx},{\mby}) = \mbA({\mbx},{\mby} + \ell_k
{\mbe}_k)$ for $1\le k \le d,$
where $\mbe_j$ denotes the $j$--th standard unit vector in
$\mbbR^d.$
Assume that $\mbA^\vep({\mbx}) := \mbA({\mbx},{\mbx}/\vep).$
Then, the following result is well--known
  \cite{cioranescu1999introduction, jikov, sanchez}.
\begin{theorem}[Periodic case]
Suppose that $\mbA^\vep({\mbx}) := \mbA({\mbx},{\mbx}/\vep)$ where $\mbA({\mbx},{\mby})$ is $Y$-periodic for the variable ${\mby}=(y_1,\cdots,y_d)$. 
Let $f\in L^{2}(\O)$. Then there exists a homogenized coefficient tensor
$\mbA^0$ such that
\begin{equation*}
\left\{
\begin{aligned}
& u^{\vep} \rightharpoonup u^0 \mbox{ weakly in } H^1_0(\O), \\
& \mbA^{\vep} \grad u^{\vep} \rightharpoonup \mbA^0 \grad u^0 \mbox{ weakly in }
[L^2(\O)]^d, 
\end{aligned}
\right.
\end{equation*}
where $u^0$ is a unique solution in $H^1_0(\O)$ of the homogenized
problem:
\begin{equation}\label{eq:u0}
\left\{
\begin{aligned}
-\div \Big( \mbA^0({\mbx}) \grad u^0({\mbx}) \Big) &= f({\mbx})
\quad \mbox{in } \O, \\
u^0 &= 0 \quad \quad \ \mbox{on } \p \O.
\end{aligned}
\right.
\end{equation}
In fact, the homogenized coefficient $\mbA^0 = (A^0_{ij})$ is given by
\begin{equation*}
A^0_{ij}({\mbx}) = \frac{1}{|Y|} \int_{Y} \Big(A_{ij}({\mbx},{\mby}) + \sum_{k=1}^{d} A_{ik}({\mbx},{\mby})
\frac{\p \chi^j}{\p y_k} \Big) \dy,
\end{equation*}
where $|Y|$ denotes the volume of $Y$, and $\chi^j = \chi^j({\mbx},{\mby})$ the solution of the cell problem:
\begin{equation*}
\left\{
\begin{aligned}
& -\nabla_{\mby} \cdot \left(\mbA({\mbx},{\mby}) \grad_{\mby} \chi^j \right) 
= 
	\grad_{\mby} \cdot \left(\mbA({\mbx},{\mby}) \ {\mbe}_j \right) \quad \mbox{in } Y, \\
& \chi^j \mbox{ is $Y$-periodic}, \\
& \int_Y \chi^j \dy=0.
\end{aligned}
\right.
\end{equation*}
\end{theorem}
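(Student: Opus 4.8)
The plan is to use Tartar's energy method (the method of oscillating test functions), whose decisive ingredient is the Murat--Tartar div--curl lemma. First I would record the uniform a priori bound: by the uniform ellipticity and boundedness of $\mbA^\vep$, the Lax--Milgram lemma yields a unique weak solution $u^\vep\in H^1_0(\O)$, and testing against $u^\vep$ gives $\la\|u^\vep\|_{H^1_0(\O)}^2\le\int_\O f\,u^\vep$, hence $\|u^\vep\|_{H^1_0(\O)}\le C\|f\|_{L^2(\O)}$ independently of $\vep$. Consequently the flux $\mbF^\vep:=\mbA^\vep\grad u^\vep$ is bounded in $[L^2(\O)]^d$. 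Passing to a subsequence, $u^\vep\rightharpoonup u^0$ in $H^1_0(\O)$ and $\mbF^\vep\rightharpoonup\mbF^0$ in $[L^2(\O)]^d$; letting $\vep\to0$ in the weak formulation gives $-\div\mbF^0=f$ in $\O$. The whole content of the theorem then reduces to identifying $\mbF^0=\mbA^0\grad u^0$ with $\mbA^0$ given by the stated averaged formula.

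Next I would build the oscillating correctors from the cell problem. For each $j$ set $w^j_\vep(\mbx):=x_j+\vep\,\chi^j(\mbx,\mbx/\vep)$, so that $w^j_\vep\rightharpoonup x_j$ in $H^1(\O)$ and $\grad w^j_\vep$ is bounded in $[L^2(\O)]^d$. Writing $g^j(\mbx,\mby):=\mbA(\mbx,\mby)\big(\mbe_j+\grad_{\mby}\chi^j(\mbx,\mby)\big)$, the cell problem is precisely $\grad_{\mby}\cdot g^j(\mbx,\cdot)=0$, so the $\vep^{-1}$ contribution to $\div\big(\mbA^\vep\grad w^j_\vep\big)$ cancels and $\div\big(\mbA^\vep\grad w^j_\vep\big)$ stays bounded in $L^2(\O)$, hence relatively compact in $H^{-1}(\O)$ by Rellich. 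Moreover, by the Riemann--Lebesgue averaging of $Y$-periodic functions, $\mbA^\vep\grad w^j_\vep\rightharpoonup \frac{1}{|Y|}\int_Y g^j(\mbx,\mby)\,\dy=\mbA^0\mbe_j$ in $[L^2(\O)]^d$, the $i$th component of this average being exactly $A^0_{ij}$.

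Then I would pass to the limit by compensated compactness. Consider $Q^\vep:=\grad w^j_\vep\cdot\mbA^\vep\grad u^\vep$. Using symmetry of $\mbA^\vep$ one reads $Q^\vep$ in two ways: as $(\mbA^\vep\grad w^j_\vep)\cdot\grad u^\vep$, pairing the asymptotically divergence--free field $\mbA^\vep\grad w^j_\vep$ with the curl--free field $\grad u^\vep$; and as $\grad w^j_\vep\cdot\mbF^\vep$, pairing the curl--free field $\grad w^j_\vep$ with $\mbF^\vep$, whose divergence $-f$ is fixed and hence compact in $H^{-1}(\O)$. Applying the div--curl lemma to each pairing and equating the two distributional limits gives $(\mbA^0\mbe_j)\cdot\grad u^0=\mbe_j\cdot\mbF^0=F^0_j$ for every $j$; since $\mbA$, and therefore $\mbA^0$, is symmetric, this says exactly $\mbF^0=\mbA^0\grad u^0$. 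Substituting into $-\div\mbF^0=f$ produces the homogenized equation \eqref{eq:u0}. Finally, $\mbA^0$ inherits symmetry and uniform positive definiteness from its variational characterization, so \eqref{eq:u0} is well posed; its solution being unique, the limits $u^0$ and $\mbF^0$ are independent of the subsequence, and the whole family converges as claimed.

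The hard part is exactly the step where weak limits fail to commute with the product $\mbA^\vep\grad u^\vep$: the oscillations of the coefficient and of the gradient are correlated, so one cannot simply multiply the two weak limits. The correctors $w^j_\vep$ are engineered so that the cell problem turns $\mbA^\vep\grad w^j_\vep$ into an asymptotically divergence--free field, which is precisely what lets the div--curl lemma extract the correct limit. A technical point requiring care is enough regularity of $\mbA(\cdot,\mby)$ in the slow variable to keep $\chi^j$ and $g^j$ differentiable in $\mbx$, so that $\div(\mbA^\vep\grad w^j_\vep)$ is genuinely $L^2$-bounded; in the purely periodic case $\mbA=\mbA(\mby)$ this is automatic. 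Alternatively, the entire argument can be replaced by two-scale convergence: extracting the two-scale limit $\grad u^\vep\to\grad_\mbx u^0+\grad_{\mby}u_1$ and testing against $\phi(\mbx)+\vep\psi(\mbx,\mbx/\vep)$ decouples into the cell problem for $u_1=\sum_j\chi^j\,\p_{x_j}u^0$ and the homogenized equation, reading off the same formula for $\mbA^0$.
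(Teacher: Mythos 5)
The paper does not prove this theorem: it is quoted as a classical homogenization result with citations to Cioranescu--Donato, Jikov--Kozlov--Oleinik, and Sanchez-Palencia, so there is no in-paper argument to compare against. Your proposal is the standard Tartar energy-method proof given in exactly those references, and it is essentially correct: the a priori bound, the extraction of weak limits of $u^\vep$ and of the flux, the oscillating test functions $w^j_\vep = x_j + \vep\chi^j(\cdot,\cdot/\vep)$, the identification of the weak limit of $\mbA^\vep\grad w^j_\vep$ with $\mbA^0\mbe_j$ via periodic averaging, and the double application of the div--curl lemma to $\grad w^j_\vep\cdot\mbA^\vep\grad u^\vep$ are all in order, and you correctly use the symmetry of $\mbA^\vep$ (and hence of $\mbA^0$) to pass from $(\mbA^0\mbe_j)\cdot\grad u^0=F^0_j$ to $\mbF^0=\mbA^0\grad u^0$. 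The one point that deserves more than a remark is the regularity in the slow variable: the theorem as stated assumes only $\mbA^\vep\in[L^\infty(\O)]^{d\times d}$, under which $\chi^j(\mbx,\mby)$ need not be differentiable in $\mbx$, so the claim that $\div(\mbA^\vep\grad w^j_\vep)$ is bounded in $L^2(\O)$ does not follow as stated; the standard fix is either to prove the purely periodic case $\mbA=\mbA(\mby)$ first and treat $\mbx$-dependence by approximation and a locality argument, or to switch to the two-scale convergence framework with admissible test functions, as you yourself indicate in the closing paragraph. With that caveat made precise, your argument reproduces the cited classical proof.
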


\subsection{Some notations}
Let $D$ be a bounded open domain in $\mbbR^d, d=2,3$.
Denote by $L^2(D),$ $H^1(D),$ and $H^1_0(D)$ the standard Sobolev spaces on $D$ with the standard Sobolev norms $\|\cdot\|_{0,D}$, $\|\cdot\|_{1,D}$, and (semi-)norm $|\cdot|_{1,D}$, respectively. 
By $C^\infty_{\#}(D)$ designate the set of $Y$--periodic $C^\infty$ functions
on $D$ and
by $H^1_{\#}(D)$ the closure of $C^\infty_{\#}(D)$ with respect to the norm $\|\cdot\|_{1,D}$ in $H^1(D)$. 
$\tH^1_{\#}(D)$ is a subspace of $H^1_{\#}(D)$ which consists of
functions whose mean values on $D$ are zero. We will mean by
$(\cdot,\cdot)_{D}$ the $L^2(D)$ inner product. In
the case of $D=\O$, the subscript $D$ on notations of norms and inner
product is omitted. For $d-1$ dimensional face $\tau$,
$\<\cdot,\cdot\>_\tau$ indicates the $L^2(\tau)$ inner product.

By $|D|$ we denote the volume of the domain $D$.
For an integrable function $v\in L^1(D)$, the mean value on $D$ is denoted by
$\avint_D v=\frac1{|D|}\int_D v.$
Throughout this paper $C$ denotes a generic constant and its value varies depending on the position where it appears.

Consider a family of triangulations $\Tau_h=(\Tau_h(D))_{0<h<1}$ for the domain $D$ consisting of quadrilateral elements.
Let $\mcF_h^i, \mcF_h^b,$ and $\mcF_h^{b,opp}$ denote the sets of all interior faces, of all boundary faces, 
and of all pairs consisting of two boundary faces on opposite position
of each period cell $Y$ in $\Tau_h$, respectively.
Set $\mcF_h=\mcF_h^i\cup \mcF_h^b.$

Set
\begin{align*}
V_{h}^{P_1}(D) &= \Big\{ v \in L^2(D) ~\Big|~ \left.v\right|_{K} \in
    \mathcal{P}_1(K) \forany K \in \Tau_h(D), \ 
    \<[v]_\tau,1\>_\tau = 0 \forany \tau\in\mcF_h^i \Big\}, \\
V_{h,0}^{P_1}(D) &= \Big\{ v \in V_{h}^{P_1}(D) ~\Big|~ 
	\< v,1\>_\tau = 0 \forany \tau\in\mcF_h^b \Big\}, \\
V_{h,\#}^{P_1}(D) &= \Big\{ v \in V_{h}^{P_1}(D) ~\Big|~ 
	\<v,1\>_{\tau_1}  = \<v,1\>_{\tau_2}  \forany (\tau_1,\tau_2) \in\mcF_h^{b,opp}, \ (v,1)_{D} = 0 \Big\},
\end{align*}
where $\mathcal{P}_1(K)$ denotes the set of linear polynomials on $K$, and $[\cdot]_\tau$ the jump across face $\tau$.
Let $|\cdot|_{1,h,D}$ denote the mesh-dependent broken energy
norm on $V_{h}^{P_1}(D)$, {\it i.e.,}
  $|v|_{1,h,D} =\big[\sum_{K\in \Tau_h(D)} \|\grad v\|_{0,K}^2\big]^{1/2}.$
The nonconforming Galerkin method for \eqref{eq:u0} reads as: find $u_h^0\in
V_{h,0}^{P_1}(\O)$ such that
\begin{align}
\sum_{K\in\Tau_h(\O)} (\mbA^0 \grad u_h^0, \grad v_h) = (f,v_h),\quad
v_h\in V_{h,0}^{P_1}(\O).
\end{align}
The standard error analysis for nonconforming elements implies
the following a priori error estimate, see \cite{park-sheen-p1quad, dssy-nc-ell},
\begin{align}
|u^0_h-u^0|_{1,h,\O} \le Ch |u^0|_2.
\end{align}

\section{FEHMM based on nonconforming spaces}\label{sec:fehmm-nc}
In this section we introduce a FEHMM scheme based on nonconforming finite spaces for the multiscale elliptic problem \eqref{eq:multiscale-elliptic}. 
We follow the framework of FEHMM \cite{abdulle2009finite, abdulle2012discontinuous} with slight modification for nonconforming function spaces.
Let $\Tau_H := \Tau_H(\O)$ be a regular triangulation of $\O$ with
quasi-uniform quadrilaterals ($d=2$) or hexahedrons ($d=3$).
Define the macro mesh parameter $H:=\max_{K\in \Tau_H} \diam(K)$.
For each macro element $K_H \in \Tau_H$, let $\mcF(K_H)$ denote the set of its faces. 
The set of all faces, of all interior faces and of all boundary faces are denoted by $\mcF_H$, $\mcF_H^i$ and $\mcF_H^b$, respectively.
Let $F_{K_H}: \hat{K} \rightarrow K_H $ be a bilinear transformation
from the reference domain onto $K_H$. Notice that for the $P_1$--NC
quadrilateral element, one may use the nonparametric scheme \cite{park-sheen-p1quad}.
Set 
\begin{eqnarray}
V= H^1_0(\O)\quad\text{and}\quad V_H = V^{P_1}_{H,0}(\O)
\end{eqnarray}
and denote the macro mesh-dependent (semi-)norm on $V+V_H$ by
$
  \brknmH{\cdot} := \big[\sum_{K_H \in \Tau_H} | \cdot |_{1,K_H}^2 \big]^{1/2}.
$

To formulate the FEHMM scheme,
we need a quadrature formula which consists of $I$ points and weights $(\mbx_i, \omega_i )_{i=1}^I$ 
on each element $K_H \in \Tau_H$ such that
\begin{align} \label{eq:quad}
\sum_{i=1}^I \omega_i \nabla v(\mbx_i) \cdot \nabla w(\mbx_i) 
=
	\int_{K_H} \nabla v \cdot \nabla w \dx
	\quad \forany v, w \in \mathcal{P}_1(K_H).
\end{align}

\begin{remark}
The above characteristics of the quadrature formula are useful
to prove the existence and uniqueness of the solution
as well as optimal error estimates in \secref{sec:analysis}.
\end{remark}

On each element $K_H \in \Tau_H$ we define $I$ sampling domains $K_{\delta,i} := \mbx_i + [-\delta/2, \delta/2]^d $ corresponding to each quadrature point $\mbx_i$ for given $\delta \ll 1$. 
The size of the sampling domains $\delta$ should be chosen to be comparable with $\vep$.
The most trivial case is $\delta=\vep$, but not always. The effect of various $\delta$ will be mentioned in \secref{sec:modeling-error}.
On each sampling domain we consider a micro triangulation to deal with a bundle of micro problems on it.
Let $\Tau_h(K_{\delta,i})$ be a uniform triangulation of a sampling domain $K_{\delta,i}$ consisting of quadrilateral elements and $h:=\max_{K\in \Tau_h(K_{\delta,i})} \diam(K)$ the micro mesh parameter.
Each micro element $K_h \in \Tau_h(K_{\delta,i})$ has a bilinear transformation $F_{K_h}: \hat{K} \rightarrow K_h$ such that $F_{K_h} (\hat{K}) = K_h$.
$\mcF(K_h)$ denotes the set of all faces of $K_h$.

On each sampling domain $K_{\delta,i}$ we will consider two
micro function spaces, namely, a continuous function space
$\tH(K_{\delta,i})$ and a discrete space $\tH_h(K_{\delta,i})$ which are determined by a choice of macro-micro coupling condition we use. 
If the coefficient tensor $\mbA^\vep$ in \eqref{eq:multiscale-elliptic} has a periodic property,
then we can impose a periodic coupling condition. 
On the other hand, a Dirichlet coupling condition can be used for general cases.
Respective to the choice we define two micro function spaces by
\begin{subequations}
\begin{align}\label{def:W}
\tH(K_{\delta,i}) &= \begin{cases} \tH^1_{\#}(K_{\delta,i}), 
&  \quad\text{periodic case}, \\
	H^1_0(K_{\delta,i}), & \quad\text{Dirichlet BC case}, \\
\end{cases}\\
\label{def:Wh}
\tH_h(K_{\delta,i}) &= 
	\begin{cases}
	V^{P_1}_{h,\#}(K_{\delta,i})& \quad\text{periodic case}, \\
	V^{P_1}_{h,0}(K_{\delta,i})&  \quad\text{Dirichlet BC case}.
	\end{cases}
\end{align}
\end{subequations}
The characteristics of the discrete space for periodic BC coupling cases are described in \cite{yim-sheen-p1nc-per}.
In both periodic and Dirichlet BC coupling cases, the micro mesh-dependent (semi-)norm on $\tH(K_{\delta,i})+\tH_h(K_{\delta,i})$ is
defined by
$
\brknmh{\cdot} := \big[ \sum_{K_h \in \Tau_h(K_{\delta,i})} | \cdot |_{1,K_h}^2 \big]^{1/2}.
$
The expression $K_{\delta,i}$ in notations will be omitted if there is no
ambiguity of choice for sampling domains.

For the sake of convenience, introduce the two bilinear forms, 
$a^{K_{\delta,i}}: H^1(K_{\delta,i})\times H^1(K_{\delta,i}) \to \mbbR $ and
$a^{K_{\delta,i}}_h: V^{P_1}_h(K_{\delta,i})\times
  V^{P_1}_h(K_{\delta,i}) \to \mbbR $ 
by
\begin{align*}
  a^{K_{\delta,i}}(u, v) & = \int_{K_{\delta,i}} \mbA^\vep
  \grad u \cdot \grad v \dx 
\forany u, v \in H^1(K_{\delta,i}),
\\
  a^{K_{\delta,i}}_h(u_h, v_h) & = \sum_{K_h \in
    \Tau_h(K_{\delta,i})} \int_{K_h} \mbA^\vep \grad u_h \cdot
  \grad v_h \dx
\forany u_h, v_h \in V^{P_1}_h(K_{\delta,i}).
\end{align*}
Also define two bilinear forms  $\oa_{H}$ and 
 $a_{H}: V_H\times V_H \rightarrow \mbbR$ as follows:
for all $u_H,v_H \in V_H,$
\begin{subeqnarray}
\slabel{eq:weakform-conti-bilinear}
 \oa_{H}(u_H, v_H)  &=& 
 	\sum_{K_H \in \Tau_H} \sum_{i=1}^I \frac{\omega_i}{\delta^d} 
 	\int_{K_{\delta,i}} \mbA^\vep \grad u^m \cdot \grad v^m
        \dx, \\
a_{H}(u_H, v_H) &=& \sum_{K_H \in \Tau_H} \sum_{i=1}^I \frac{\omega_i}{\delta^d} \sum_{K_h \in \Tau_h(K_{\delta,i})} \int_{K_h} \mbA^\vep \grad u_h^m \cdot \grad v_h^m \dx,
\end{subeqnarray}
where $u^m, v^m, u_h^m,$ and $v_h^m$
are the solutions of the continuous and discrete micro problems with constraints $u_H$ and $v_H$, respectively,
on each sampling domain $K_{\delta,i}$ in $K_H \in \Tau_H$
defined as follows:
for given $w_H\in V_H,$ $w^m \in w_H + \tH(K_{\delta,i})$ and
$w_h^m \in w_H + \tH_h(K_{\delta,i})$ are the solutions of
\begin{subeqnarray} \label{eq:micro-problem}
a^{K_{\delta,i}}(w^m, z)
&=& 0 \forany z \in \tH(K_{\delta,i}),\slabel{eq:micro-problem-a}\\
a^{K_{\delta,i}}_h(w_h^m, z_h)
&=& 0 \forany z_h \in \tH_h(K_{\delta,i}). \slabel{eq:micro-problem-b}
\end{subeqnarray}
%
In the above expressions $w_H + \tH(K_{\delta,i})$ and $w_H + \tH_h(K_{\delta,i})$,
$w_H$ actually means $\left.w_H\right|_{K_{\delta,i}}$, the function restricted to $K_{\delta,i}$.
However, here and in what follows,
we use this abusive notation for the sake of simple expressions if context determines proper range of given function.

\begin{remark}\label{rem:no-linearization}
By following a typical FEHMM framework,
one needs to consider $w^{lin}_H$, a linearization of $w_H$ at $\mbx_i$,
instead of $w_H$ itself in order to get $w^m$ and $w^m_h$ in \eqref{eq:micro-problem}.
In our discussion, however, such a linearization is unnecessary
because the finite element space which we are considering consists of
piecewise linear functions. This is one of the major advantages of
using the $P_1$--NC quadrilateral element.
\end{remark}

A nonconforming FEHMM weak formulation of the problem \eqref{eq:multiscale-elliptic}
is now ready to be stated as follows:
\paragraph*{\bf (Main Weak Formulation)}
{\it Find $u_H \in V_H$ such that}
\begin{align}\label{eq:uH}
a_{H} (u_H, w_H) = (f,w_H) \ \forany w_H \in V_H.
\end{align}

\begin{figure}[!t]
\centering
\epsfig{figure=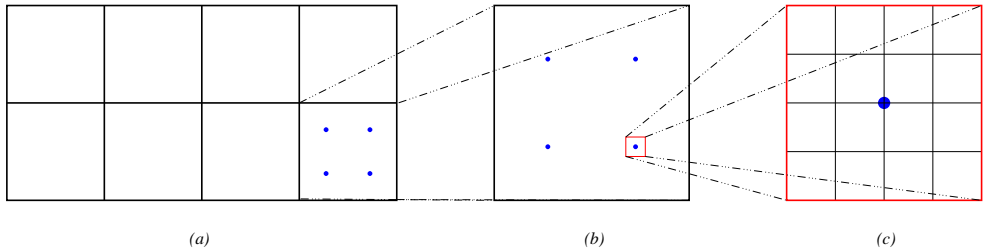,width=1.0\textwidth}
\caption{The hierarchy of geometric objects in FEHMM scheme. (a) Domain $\O$ and its triangulation $\Tau_H$ (b) Macro element $K_H$ (c) Sampling domain $K_{\delta,i}$ surrounding a quadrature point $\mbx_i$ and its triangulation $\Tau_h$ consisting of micro elements $K_h$}
\label{fig:geom-hierarchy}
\end{figure}


Several micro functions will be introduced, which will be used in \secref{sec:analysis}.
For $j=1,\cdots,d,$
let
$\psi^j=\psi^j(\mbx) \in \tH(K_{\delta,i})$ and
$\psi^j_h = \psi^j_h(\mbx) \in \tH_h(K_{\delta,i})$ be the solutions of 
the following micro problems:
\begin{subeqnarray}\slabel{eq:psi}
  a^{K_{\delta,i}}(\psi^j, z) &=& - \int_{K_{\delta,i}} \mbA^\vep \mbe_j \cdot \grad z \dx \forany z \in \tH(K_{\delta,i}),\\
\slabel{eq:psi_h}
  a^{K_{\delta,i}}_h(\psi^j_h, z_h) &=& - \sum_{K_h \in \Tau_h(K_{\delta,i})} \int_{K_h} \mbA^\vep \mbe_j \cdot \grad z_h
  \dx
  \forany z_h \in \tH_h(K_{\delta,i}).
\end{subeqnarray}
Later, $\psi^j_h,j=1,\cdots,d,$ play as basis functions for the
solution space of the micro problem \eqref{eq:micro-problem}
on each sampling domain $K_{\delta,i}.$
Instead of $\psi^j$ and $\psi^j_h$, 
sometimes the following function notations are useful:
\begin{eqnarray}\label{eq:phi}
\varphi^j(\mbx):=\psi^j(\mbx) + x_j\quad\text{and }  
\varphi^j_h(\mbx):=\psi^j_h(\mbx) + x_j.
\end{eqnarray}

Notice that the above \eqref{eq:psi} and \eqref{eq:psi_h}
are equivalent to
\begin{subeqnarray}\label{eq:phi-phi_h}
  a^{K_{\delta,i}}(\varphi^j, z) &=& 0
\forany z\in  \tH(K_{\delta,i}),\\
  a^{K_{\delta,i}}_h(\varphi^j_h, z_h) &=& 0
  \forany z_h \in \tH_h(K_{\delta,i}).
\end{subeqnarray}

\begin{remark}
Indeed, $\psi^j$ and $\psi^j_h$ are nothing but
$\psi^j = (x_j)^m - x_j$ and $\psi^j_h = (x_j)^m_h - x_j$, respectively,
with the meaning of the superscript `$m$' as same as in \eqref{eq:micro-problem}.
Moreover, $\varphi^j = (x_j)^m$ and $\varphi^j_h = (x_j)^m_h$.
\end{remark}

We also introduce several weak formulations which are used for analysis.
A weak formulation of the homogenized problem \eqref{eq:u0} is given
as to find $u^0 \in V$ such that
\[
a^0(u^0,v) = (f,v) \forany v \in V,
\]
where
\begin{align}
 a^0(v,w)  = \int_{\O} \mbA^0(\mbx) \grad v \cdot \grad w \dx\quad
\forany v,w \in V.
\end{align}
Also, a weak formulation of the homogenized problem \eqref{eq:u0} with quadrature rule in macro scale, corresponding
to \eqref{eq:uH}, can be defined
as to find $u^0_H \in V_H$ fulfilling
\begin{eqnarray}\label{eq:u^0_H}
a_{H}^0(u^0_H,v_H) = (f,v_H) \forany v_H \in V_H,
\end{eqnarray}
where
\begin{align}\label{eq:a^0_H}
 a_{H}^0(v_H,w_H) 
= 
 	\sum_{K_H \in \Tau_H} \sum_{i=1}^I \omega_i
        \ \mbA^0(\mbx_i) \grad v_H(\mbx_i) \cdot \grad w_H(\mbx_i)\quad\forany
v_H,w_H \in V_H.
\end{align}
Finally, recalling the bilinear form $\oa_{H}:V_H\times V_H
\rightarrow \mbbR$ defined in \eqref{eq:weakform-conti-bilinear}, define
a {\it semi-discrete FEHMM solution} $\ou_H \in V_H$ which satisfy
\begin{align}\label{eq:overlineuH}
 \oa_{H}(\ou_H,v_H) = (f,v_H) \forany v_H \in V_H.
\end{align}



\section{Fundamental properties of nonconforming HMM}\label{sec:analysis}

For our analysis, we mainly follow the framework of \cite{abdulle2012discontinuous}.
We want to emphasize the changing parts only due to the use of the $P_1$--nonconforming quadrilateral finite element space.

\subsection{Existence and uniqueness}
We begin with the proof of the existence and uniqueness of solutions
to 
\eqref{eq:uH} and \eqref{eq:overlineuH}.
\begin{lemma}
Let $v^m_h$ be the solution of the micro problem \eqref{eq:micro-problem-b}
with constraint $v_H$ on a sampling domain $K_{\delta,i}$.
Then
\begin{align*}
|v_H|_{1,K_{\delta,i}} \le \brknmh{v^m_h} \le
\frac{\Lambda}{\lambda}|v_H|_{1,K_{\delta,i}}.
\end{align*}
\end{lemma}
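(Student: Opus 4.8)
The plan is to write the micro solution as $v^m_h = v_H + \tilde v_h$ with $\tilde v_h \in \tH_h(K_{\delta,i})$, where $v_H$ denotes its (linear) restriction to $K_{\delta,i}$. Because the sampling domain $K_{\delta,i}$ sits inside a single macro element, $\grad v_H$ is a \emph{constant} vector on $K_{\delta,i}$, so in particular $\sum_{K_h}|v_H|^2_{1,K_h} = |v_H|^2_{1,K_{\delta,i}}$. I would obtain both inequalities by pairing $\grad v^m_h$ against this constant gradient and using the uniform ellipticity bounds $\lambda|\xi|^2 \le \xi^t\mbA^\vep\xi \le \Lambda|\xi|^2$, which for the symmetric matrix $\mbA^\vep$ also give $|\eta^t\mbA^\vep\xi| \le \Lambda|\eta|\,|\xi|$.

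For the upper bound, note that $\tilde v_h = v^m_h - v_H \in \tH_h(K_{\delta,i})$ is an admissible test function, so testing the discrete micro problem \eqref{eq:micro-problem-b} with $z_h = \tilde v_h$ yields $a^{K_{\delta,i}}_h(v^m_h,\tilde v_h)=0$ and hence $a^{K_{\delta,i}}_h(v^m_h,v^m_h) = a^{K_{\delta,i}}_h(v^m_h,v_H)$. I would then bound the left-hand side below by coercivity, $\lambda\brknmh{v^m_h}^2 \le a^{K_{\delta,i}}_h(v^m_h,v^m_h)$, and the right-hand side above by boundedness followed by the elementwise Cauchy--Schwarz inequality,
\[
a^{K_{\delta,i}}_h(v^m_h,v_H) \le \Lambda\,\brknmh{v^m_h}\,|v_H|_{1,K_{\delta,i}},
\]
after which dividing by $\brknmh{v^m_h}$ gives $\brknmh{v^m_h}\le \frac{\Lambda}{\lambda}|v_H|_{1,K_{\delta,i}}$.

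The lower bound rests on the identity
\[
\sum_{K_h\in\Tau_h(K_{\delta,i})}\int_{K_h}\grad \tilde v_h\dx = \mathbf 0 \qquad \forany \tilde v_h\in \tH_h(K_{\delta,i}),
\]
which I expect to be the one delicate point. Granting it, constancy of $\grad v_H$ forces $\sum_{K_h}\int_{K_h}\grad\tilde v_h\cdot\grad v_H\dx = 0$, whence $\sum_{K_h}\int_{K_h}\grad v^m_h\cdot\grad v_H\dx = |v_H|^2_{1,K_{\delta,i}}$, and Cauchy--Schwarz immediately gives $|v_H|_{1,K_{\delta,i}}\le\brknmh{v^m_h}$. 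To establish the identity I would integrate by parts on each micro element, $\int_{K_h}\p_k\tilde v_h\dx = \int_{\p K_h}\tilde v_h\,n_k\ds$, and sum over $K_h$; since the outward normal is constant along each straight micro face, the interior faces contribute $\sum_{\tau\in\mcF_h^i} n_k\<[\tilde v_h]_\tau,1\>_\tau$, which vanishes by the zero-mean jump condition built into $V_h^{P_1}$. The boundary faces cause no trouble either: in the Dirichlet case each term vanishes through $\<\tilde v_h,1\>_\tau=0$ on $\tau\in\mcF_h^b$, and in the periodic case opposite faces carry equal averages $\<\tilde v_h,1\>_{\tau_1}=\<\tilde v_h,1\>_{\tau_2}$ but opposite normals, so they cancel in pairs. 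Making the nonconforming and periodic/Dirichlet constraints reproduce exactly the cancellation that a genuine $H^1$ trace would supply in the conforming theory is the crux; everything else is routine.
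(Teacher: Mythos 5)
Your proof is correct and follows essentially the same route as the paper: both arguments rest on the decomposition $v^m_h = v_H + \tilde v_h$ with $\tilde v_h \in \tH_h(K_{\delta,i})$, on the vanishing of $\sum_{K_h}\int_{K_h}\grad \tilde v_h \dx$ enforced through integration by parts by the mean-zero jump and boundary/periodicity constraints, and on the Galerkin orthogonality $a^{K_{\delta,i}}_h(v^m_h,\tilde v_h)=0$ together with the ellipticity bounds. The only cosmetic difference is that the paper expands the nonnegative quantities $\sum_{K_h}\int_{K_h}|\grad(v^m_h-v_H)|^2\dx$ and $\sum_{K_h}\int_{K_h}\mbA^\vep\grad(v^m_h-v_H)\cdot\grad(v^m_h-v_H)\dx$ rather than invoking Cauchy--Schwarz, which for the upper bound actually yields the slightly sharper constant $\sqrt{\Lambda/\lambda}$.
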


\begin{proof}
Utilizing the fact that $v_H$ is linear on $K$
for all $K\in \Tau_H$ 
and \eqref{def:Wh}, we have
\begin{align*}
0
&\le \sum_{K_h \in \Tau_h} 
\int_{K_h} \grad (v^m_h-v_H) \cdot \grad (v^m_h-v_H) \dx \\
&= 
	\sum_{K_h \in \Tau_h} 
\int_{K_h} |\grad v^m_h|^2 
	- |\grad v_H|^2 
	- 2 \grad (v^m_h-v_H) \cdot \grad v_H \dx \\
&= 	
\Big[\sum_{K_h \in \Tau_h} 
	\int_{K_h} |\grad v^m_h|^2 \dx 
	- \int_{K_{\delta,i}} |\grad v_H|^2 \dx \Big]
	- 2 \grad v_H \cdot \sum_{K_h \in \Tau_h} \int_{\p K_h} \bnu_{K_h} (v^m_h-v_H) \dsig
\end{align*}
where $\bnu_{K_h}$ denotes the unit outward normal to $K_h$.
Since $v^m_h-v_H \in \tH_h(K_{\delta,i})$, the last term in the above equation
vanishes. 
Consequently, we get
\begin{align*}
\int_{K_{\delta,i}} |\grad v_H|^2 \dx 
\le 
	\sum_{K_h \in \Tau_h} \int_{K_h} |\grad v^m_h|^2 \dx.
\end{align*}

On the other hand, due to the ellipticity of $\mbA^\vep$, we have
\begin{align*}
0
&\le 
	\sum_{K_h \in \Tau_h} \int_{K_h} \mbA^\vep \grad (v^m_h-v_H) \cdot \grad (v^m_h-v_H) \dx \\
&= 
	\sum_{K_h \in \Tau_h} \int_{K_h} \mbA^\vep \grad v_H \cdot \grad v_H 
	- \mbA^\vep \grad v^m_h \cdot \grad v^m_h \\
&\qquad	+ \mbA^\vep \grad (v^m_h-v_H) \cdot \grad v^m_h 
	+ \mbA^\vep \grad v^m_h \cdot \grad (v^m_h-v_H) \dx.
\end{align*}
Due to the definition of $v^m_h$ in \eqref{eq:micro-problem} and symmetry of $\mbA^\vep$, the last two terms vanish. Thus we get
\begin{align*}
\sum_{K_h \in \Tau_h} \int_{K_h} \mbA^\vep \grad v^m_h \cdot \grad v^m_h \dx 
\le 
	\int_{K_{\delta,i}} \mbA^\vep \grad v_H \cdot \grad v_H \dx.
\end{align*}
The uniform ellipticity and boundedness of $\mbA^\vep$ imply the desired inequality.
\end{proof}
Thanks to the properties of the quadrature formula \eqref{eq:quad}, the bilinear form
$a_{H}$ is bounded and coercive in $V_H$. Therefore the
existence and uniqueness of the solution $u_H$ to \eqref{eq:uH} is
guaranteed by the Lax-Milgram Lemma. Thus, we have

\begin{theorem}
There exists a unique solution $u_H$ to the problem \eqref{eq:uH}.
\end{theorem}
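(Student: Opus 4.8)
The plan is to verify the hypotheses of the Lax--Milgram lemma for the bilinear form $a_H$ on the finite-dimensional space $V_H=V^{P_1}_{H,0}(\O)$, equipped with $\brknmH{\cdot}$. Because the homogeneous boundary condition built into $V^{P_1}_{H,0}(\O)$ enforces a discrete Poincar\'e inequality for the $P_1$--nonconforming space, $\brknmH{\cdot}$ is in fact a genuine norm on $V_H$, so $(V_H,\brknmH{\cdot})$ is a Hilbert space. It then remains to check that $a_H$ is bilinear, bounded, and coercive.

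First I would record bilinearity. Since $w_H$ is affine on each macro element $K_H$, its restriction to a sampling domain $K_{\delta,i}$ is determined up to a constant by the single vector $\grad w_H(\mbx_i)$, and the discrete micro solution decomposes as $\grad w_h^m=\sum_{j=1}^d (\partial w_H/\partial x_j)(\mbx_i)\,\grad\varphi_h^j$ in terms of the cell functions of \eqref{eq:phi-phi_h}. Hence $w_H\mapsto w_h^m$ is linear, and so $a_H$, being assembled from the forms $a_h^{K_{\delta,i}}(u_h^m,v_h^m)$, is bilinear.

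Next comes coercivity, which I expect to be the crux. Using the uniform ellipticity of $\mbA^\vep$ together with positivity of the quadrature weights $\omega_i>0$ (assumed, as is standard), I would first bound $a_H(v_H,v_H)\ge \lambda\sum_{K_H\in\Tau_H}\sum_{i=1}^I \tfrac{\omega_i}{\delta^d}\,\brknmh{v_h^m}^2$. The lower bound $|v_H|_{1,K_{\delta,i}}\le\brknmh{v_h^m}$ from the preceding Lemma turns this into a sum over the macro gradients; and because $\grad v_H$ is constant on $K_H$, one has $|v_H|_{1,K_{\delta,i}}^2=\delta^d\,|\grad v_H(\mbx_i)|^2$, so the factor $\delta^d$ cancels the weight scaling $\omega_i/\delta^d$ exactly. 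Finally, the exactness of the quadrature \eqref{eq:quad} applied with $v=w=v_H$ reassembles $\sum_{i=1}^I\omega_i\,|\grad v_H(\mbx_i)|^2=|v_H|_{1,K_H}^2$, yielding $a_H(v_H,v_H)\ge\lambda\,\brknmH{v_H}^2$.

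Boundedness follows along the same lines but with the \emph{upper} estimates: the boundedness of $\mbA^\vep$ gives $|a_h^{K_{\delta,i}}(u_h^m,v_h^m)|\le\Lambda\,\brknmh{u_h^m}\,\brknmh{v_h^m}$, and the Lemma's upper bound $\brknmh{w_h^m}\le(\Lambda/\lambda)\,|w_H|_{1,K_{\delta,i}}$, combined with $|w_H|_{1,K_{\delta,i}}^2=\delta^d\,|\grad w_H(\mbx_i)|^2$ and a discrete Cauchy--Schwarz over the quadrature points, again reduces everything to $\brknmH{u_H}\,\brknmH{v_H}$ via \eqref{eq:quad}. With bilinearity, boundedness, and coercivity established, the Lax--Milgram lemma yields the existence and uniqueness of $u_H$. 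The main obstacle is really the bookkeeping inside the coercivity step: one must check that the scaling $\omega_i/\delta^d$ cancels against the sampling-domain volume $\delta^d$ and that the quadrature exactness \eqref{eq:quad} is exactly what ties the discrete micro energy back to the macro norm --- without positive weights and this exactness property, coercivity would not survive.
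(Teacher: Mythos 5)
Your proposal is correct and follows the same route as the paper, which simply invokes the quadrature properties \eqref{eq:quad} together with the preceding Lemma to assert boundedness and coercivity of $a_H$ on $V_H$ and then applies the Lax--Milgram lemma. You merely spell out the bookkeeping (the $\omega_i/\delta^d$ versus $\delta^d$ cancellation and the positivity of the weights) that the paper leaves implicit.
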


Similarly, the coercivity and boundedness of the
bilinear form $\oa_H$ can be obtained immediately, and thus one also obtains the
existence and uniqueness of the solution $\ou_H$ to
\eqref{eq:overlineuH}, stated as follows:
\begin{theorem}
There exists a unique solution $\ou_H\in V_H$ to Problem \eqref{eq:overlineuH}.
\end{theorem}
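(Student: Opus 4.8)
The plan is to invoke the Lax--Milgram lemma on the pair $(V_H,\brknmH{\cdot})$, exactly as was done for $a_H$ in the preceding theorem. Since $\brknmH{\cdot}$ is in fact a genuine norm on $V_H = V^{P_1}_{H,0}(\O)$ (by the discrete Poincaré--Friedrichs inequality available for the $P_1$--nonconforming space, see \cite{park-sheen-p1quad, dssy-nc-ell}), it suffices to establish that $\oa_H$ is bounded and coercive in this norm. The whole argument rests on a continuous counterpart of the preceding Lemma: if $v^m \in v_H + \tH(K_{\delta,i})$ solves the continuous micro problem \eqref{eq:micro-problem-a} with constraint $v_H$, then
\begin{align*}
|v_H|_{1,K_{\delta,i}} \le |v^m|_{1,K_{\delta,i}} \le \frac{\Lambda}{\lambda}\,|v_H|_{1,K_{\delta,i}}.
\end{align*}

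I would obtain this estimate by repeating the proof of the preceding Lemma almost verbatim, the only change being that the broken sum $\sum_{K_h \in \Tau_h}\int_{K_h}$ is replaced by a single integral $\int_{K_{\delta,i}}$ over the sampling domain. For the lower bound I would start from $0 \le \int_{K_{\delta,i}}|\grad(v^m - v_H)|^2\dx$, expand the square, and use that $v_H$ is linear so that $\grad v_H$ is constant on $K_{\delta,i}$; the cross term then reduces to $\grad v_H \cdot \int_{\p K_{\delta,i}}(v^m - v_H)\,\bnu\,\dsig$. This boundary integral vanishes because $v^m - v_H \in \tH(K_{\delta,i})$: in the Dirichlet case its trace is zero, while in the periodic case the contributions on each pair of opposite faces of $K_{\delta,i}$ cancel, the traces being equal and the outward normals opposite. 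The upper bound follows from the ellipticity of $\mbA^\vep$ together with the Galerkin orthogonality built into \eqref{eq:micro-problem-a}, and then from the uniform ellipticity and boundedness constants $\lambda,\Lambda$.

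With this two-sided bound in hand, coercivity and boundedness are immediate. For coercivity I would use ellipticity and the lower bound to get $\oa_H(v_H,v_H) \ge \lambda \sum_{K_H \in \Tau_H}\sum_i \frac{\omega_i}{\delta^d}\,|v_H|_{1,K_{\delta,i}}^2$; since $\grad v_H$ is constant on each $K_H$ one has $|v_H|_{1,K_{\delta,i}}^2 = |\grad v_H(\mbx_i)|^2\,\delta^d$, and the exact quadrature property \eqref{eq:quad} applied to the linear function $v_H$ collapses $\sum_i \omega_i |\grad v_H(\mbx_i)|^2$ into $|v_H|_{1,K_H}^2$, yielding $\oa_H(v_H,v_H) \ge \lambda\,\brknmH{v_H}^2$. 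Boundedness is obtained the same way, applying Cauchy--Schwarz, the bound $\|\mbA^\vep\|\le\Lambda$, and the upper half of the two-sided estimate, then again using \eqref{eq:quad} to return to the macro norm. Lax--Milgram then delivers the unique $\ou_H\in V_H$ solving \eqref{eq:overlineuH}.

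The only genuinely delicate point is the vanishing of the boundary term in the continuous analog of the Lemma, and specifically the periodic case, where one must check that the cancellation across opposite faces of $K_{\delta,i}$ is precisely what the definition of $\tH^1_\#(K_{\delta,i})$ supplies; the Dirichlet case and the remaining algebra are routine. I would also note that the factor $\delta^{-d}$ in the definition \eqref{eq:weakform-conti-bilinear} of $\oa_H$ is exactly what cancels the $\delta^d$ arising from $|v_H|_{1,K_{\delta,i}}^2$, so that the scaling in $\delta$ drops out of both the coercivity and boundedness constants and the estimates depend only on $\lambda$ and $\Lambda$.
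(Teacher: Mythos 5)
Your proposal is correct and follows essentially the same route as the paper, which simply remarks that coercivity and boundedness of $\oa_H$ follow ``similarly'' to the discrete case and then invokes Lax--Milgram; you have merely written out the continuous analogue of the preceding lemma and the quadrature-exactness step that the paper leaves implicit. The details you supply (vanishing of the boundary term for $v^m - v_H \in \tH(K_{\delta,i})$ in both the Dirichlet and periodic cases, and the cancellation of the $\delta^{\pm d}$ factors) are exactly the ones the paper's ``immediately'' is relying on.
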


\subsection{Recovered homogenized tensors}
The recovered homogenized tensors 
$\overline{\mbA}^0_{K_{\delta,i}}$ and
$\mbA^0_{K_{\delta,i}}$ on a sampling domain $K_{\delta,i}$ are defined by
\begin{subequations}\label{eq:recovered-homogenized-tensor}
\begin{align}
\overline{\mbA}^0_{K_{\delta,i}} 
&= 
	\delta^{-d} \int_{K_{\delta,i}} \mbA^\vep(\mbx) \left(I + \mbJ_{\psi}^T \right) \dx,\\
  \mbA^0_{K_{\delta,i}} 
&= 
	\delta^{-d} \sum_{K_h \in \Tau_h(K_{\delta,i})} \int_{K_h} \mbA^\vep(\mbx) \left(I + \mbJ_{\psi_h}^T \right) \dx,
\end{align}
\end{subequations}
where $\mbJ_{\psi}$ and $\mbJ_{\psi_h}$ are $d \times d$ matrices
defined by
$\left(J_{\psi}\right)_{jk} 
= 	\frac{\p \psi^{j}}{\p x_k}
$
and
$
\left(J_{\psi_h}\right)_{jk} 
= 
	\frac{\p \psi^j_h}{\p x_k},\ 1\le j,k\le d,$
respectively. The following propositions show the essential characteristic of two recovered homogenized tensors.

\begin{proposition}\label{prop:rec-homojk} The following
  representations hold for
$\big(\overline{A}^0_{K_{\delta,i}}\big)_{jk}$ and
  $\big(A^0_{K_{\delta,i}}\big)_{jk}$:
\begin{align*}
\big(\overline{A}^0_{K_{\delta,i}}\big)_{jk}
&= 
	\delta^{-d} \int_{K_{\delta,i}}
	\sum_{\ell=1}^d A^\vep_{j\ell} \frac{\p \varphi^k}{\p x_\ell} \dx
= 
	\delta^{-d} \int_{K_{\delta,i}}
	\mbA^\vep \grad \varphi^k \cdot \grad x_j \dx \\
&= 
	\delta^{-d} \int_{K_{\delta,i}}
	\mbA^\vep \grad \varphi^k \cdot \grad \varphi^j \dx,\\
\big(A^0_{K_{\delta,i}}\big)_{jk}
&= 
	\delta^{-d} \sum_{K_h \in \Tau_h} \int_{K_h}
	\sum_{\ell=1}^d A^\vep_{j\ell} \frac{\p \varphi^k_h}{\p x_\ell} \dx
= 
	\delta^{-d} \sum_{K_h \in \Tau_h} \int_{K_h}
	\mbA^\vep \grad \varphi^k_h \cdot \grad x_j \dx \\
&= 
	\delta^{-d} \sum_{K_h \in \Tau_h} \int_{K_h} 
	\mbA^\vep \grad \varphi^k_h \cdot \grad \varphi^j_h \dx.
\end{align*}
\end{proposition}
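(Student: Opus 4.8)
The plan is to establish each chain by elementary linear algebra, with the Galerkin orthogonality \eqref{eq:phi-phi_h} entering only at the final equality. I would write out the argument for the continuous tensor $\overline{A}^0_{K_{\delta,i}}$; the discrete identity follows verbatim upon replacing $\psi^j,\varphi^j,a^{K_{\delta,i}}$ and $\int_{K_{\delta,i}}$ by $\psi^j_h,\varphi^j_h,a^{K_{\delta,i}}_h$ and $\sum_{K_h\in\Tau_h}\int_{K_h}$, since $x_j$ is smooth on each micro element $K_h$.

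For the first equality I would expand the $(j,k)$ entry of $\mbA^\vep(I+\mbJ_\psi^T)$. Using $(\mbJ_\psi^T)_{\ell k}=(J_\psi)_{k\ell}=\partial\psi^k/\partial x_\ell$, this entry is $A^\vep_{jk}+\sum_{\ell=1}^d A^\vep_{j\ell}\,\partial\psi^k/\partial x_\ell$; since $\varphi^k=\psi^k+x_k$ from \eqref{eq:phi} gives $\partial\varphi^k/\partial x_\ell=\partial\psi^k/\partial x_\ell+\delta_{k\ell}$, the constant term is absorbed and the entry collapses to $\sum_{\ell=1}^d A^\vep_{j\ell}\,\partial\varphi^k/\partial x_\ell$, which is the first stated integrand. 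The second equality is then immediate from $\grad x_j=\mbe_j$: the dot product $\mbA^\vep\grad\varphi^k\cdot\grad x_j$ selects the $j$-th component of $\mbA^\vep\grad\varphi^k$, namely $\sum_{\ell=1}^d A^\vep_{j\ell}\,\partial\varphi^k/\partial x_\ell$, so the two integrands coincide.

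The third equality is the only substantive step. Writing $\grad\varphi^j=\grad x_j+\grad\psi^j$, I would compute
\[
\int_{K_{\delta,i}}\mbA^\vep\grad\varphi^k\cdot\grad\varphi^j\dx
-\int_{K_{\delta,i}}\mbA^\vep\grad\varphi^k\cdot\grad x_j\dx
=\int_{K_{\delta,i}}\mbA^\vep\grad\varphi^k\cdot\grad\psi^j\dx
= a^{K_{\delta,i}}(\varphi^k,\psi^j).
\]
Because $\psi^j\in\tH(K_{\delta,i})$ is an admissible test function, the right-hand side vanishes by the orthogonality \eqref{eq:phi-phi_h}, which yields the third equality; note that no symmetry of $\mbA^\vep$ is needed here, as the test function sits in the second argument of $a^{K_{\delta,i}}$. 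The discrete case closes identically via $a^{K_{\delta,i}}_h(\varphi^k_h,\psi^j_h)=0$. The main (indeed only) point requiring an idea is recognizing that replacing $\grad x_j$ by $\grad\varphi^j$ in the test slot introduces exactly the term $a^{K_{\delta,i}}(\varphi^k,\psi^j)$ annihilated by the cell-problem orthogonality; everything else is bookkeeping of the transpose in $\mbJ_\psi^T$ and the identity $\grad x_j=\mbe_j$.
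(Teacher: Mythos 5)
Your proposal is correct and follows essentially the same route as the paper, whose proof is a one-line appeal to the definitions \eqref{eq:recovered-homogenized-tensor}, \eqref{eq:phi-phi_h}, and of $J_\psi$, $J_{\psi_h}$; you simply carry out the bookkeeping explicitly (the transpose identity $(\mbJ_\psi^T)_{\ell k}=\partial\psi^k/\partial x_\ell$, the identity $\grad x_j=\mbe_j$, and the orthogonality $a^{K_{\delta,i}}(\varphi^k,\psi^j)=0$ with $\psi^j\in\tH(K_{\delta,i})$). Your observation that symmetry of $\mbA^\vep$ is not needed here is accurate, and the verbatim transfer to the broken discrete forms is valid.
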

\begin{proof}
The above identities are obtained by the application of the definitions
\eqref{eq:recovered-homogenized-tensor} of the recovered homogenized
tensors, those
\eqref{eq:phi-phi_h} of $\varphi^j$ and $\varphi_h^j$, and those of $J_{\psi}$ and
$J_{\psi_h},$ and Equations \eqref{eq:micro-problem}.
\end{proof}

\begin{proposition}\label{prop:recovered-homo}
Let $u^m$ and $v^m$ be the solutions of the continuous micro problem
\eqref{eq:micro-problem-a} corresponding to the macro constraints $u_H$ and $v_H$, respectively, on $K_{\delta,i}$.
Then the following holds:
\begin{equation}\label{eq:uHvH-2}
\delta^{-d} \int_{K_{\delta,i}} \mbA^\vep \grad u^m \cdot \grad v^m \dx
= \overline{\mbA}^0_{K_{\delta,i}} \grad u_H \cdot \grad v_H.
\end{equation}
Similarly, let $u^m_h$ and $v^m_h$ be the solutions of the discrete micro problem 
\eqref{eq:micro-problem-b} corresponding to the macro constraints
$u_H$ and $v_H$, respectively, on $K_{\delta,i}$. Then the following holds:
\begin{equation}\label{eq:uHvH}
\delta^{-d} \sum_{K_h \in \Tau_h(K_{\delta,i})} \int_{K_h} \mbA^\vep \grad u^m_h \cdot \grad v^m_h \dx
= 
	\mbA^0_{K_{\delta,i}} \grad u_H \cdot \grad v_H.
\end{equation}
\end{proposition}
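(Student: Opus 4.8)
The plan is to reduce both identities to \propref{prop:rec-homojk} by exploiting the linearity of the micro-problem solution map together with the fact that $u_H$ and $v_H$ are affine on $K_{\delta,i}$. Concretely, the goal is to show that the gradient of the continuous micro solution is a fixed linear combination of the corrector gradients, namely $\grad u^m = \sum_{k=1}^d \frac{\p u_H}{\p x_k}\grad\varphi^k$ on $K_{\delta,i}$, and likewise $\grad v^m = \sum_{j=1}^d \frac{\p v_H}{\p x_j}\grad\varphi^j$; once this is in hand the claim follows by a direct bilinear expansion.

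First I would observe that the map $w_H \mapsto w^m$ sending a macro constraint to the solution of the continuous micro problem \eqref{eq:micro-problem-a} is linear. Indeed, if $w^m \in w_H + \tH(K_{\delta,i})$ and $\widetilde{w}^m \in \widetilde{w}_H + \tH(K_{\delta,i})$ solve \eqref{eq:micro-problem-a}, then for scalars $\alpha,\beta$ the combination $\alpha w^m + \beta \widetilde{w}^m$ lies in $(\alpha w_H + \beta \widetilde{w}_H) + \tH(K_{\delta,i})$ because $\tH(K_{\delta,i})$ is a linear space, and it annihilates $a^{K_{\delta,i}}(\cdot,z)$ for every $z \in \tH(K_{\delta,i})$; uniqueness of the micro solution then identifies it with $(\alpha w_H + \beta \widetilde{w}_H)^m$.

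Next, because the restriction of $u_H$ to the macro element containing $\mbx_i$ is a single linear polynomial, I would write $u_H = c + \sum_{k=1}^d \frac{\p u_H}{\p x_k}\,x_k$ with $c$ constant and $\frac{\p u_H}{\p x_k}$ the constant entries of $\grad u_H$. A constant constraint reproduces itself, $c^m = c$, since $c \in c + \tH(K_{\delta,i})$ and $a^{K_{\delta,i}}(c,z)=0$, while the constraint $x_k$ produces $\varphi^k$ by \eqref{eq:phi-phi_h}. Linearity then gives $u^m = c + \sum_k \frac{\p u_H}{\p x_k}\varphi^k$, so taking gradients kills the constant and yields the desired identity for $\grad u^m$ (and analogously for $\grad v^m$). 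This affine reduction is precisely where no linearization of the constraint is needed, cf.\ \rmkref{rem:no-linearization}.

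With the gradient identities established, I would substitute them into the left-hand side of \eqref{eq:uHvH-2} and expand bilinearly:
\begin{align*}
\delta^{-d}\int_{K_{\delta,i}}\mbA^\vep\grad u^m\cdot\grad v^m\dx
= \sum_{j,k=1}^d \frac{\p u_H}{\p x_k}\frac{\p v_H}{\p x_j}\,
\delta^{-d}\int_{K_{\delta,i}}\mbA^\vep\grad\varphi^k\cdot\grad\varphi^j\dx.
\end{align*}
By \propref{prop:rec-homojk} each inner integral equals $\big(\overline{A}^0_{K_{\delta,i}}\big)_{jk}$, so the sum collapses to $\sum_{j,k}\big(\overline{A}^0_{K_{\delta,i}}\big)_{jk}\frac{\p u_H}{\p x_k}\frac{\p v_H}{\p x_j}=\overline{\mbA}^0_{K_{\delta,i}}\grad u_H\cdot\grad v_H$, which is \eqref{eq:uHvH-2}. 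The discrete identity \eqref{eq:uHvH} follows by the identical argument with $\varphi^k_h$, the discrete micro problem \eqref{eq:micro-problem-b}, and the second group of representations in \propref{prop:rec-homojk}: the linearity-plus-affineness reasoning applies verbatim since $\tH_h(K_{\delta,i})$ is again a linear space and $u_H$ is still affine on $K_{\delta,i}$. The main obstacle is the first gradient identity, that is, justifying $\grad u^m = \sum_k \frac{\p u_H}{\p x_k}\grad\varphi^k$ cleanly; everything afterward is bookkeeping once linearity of the solution map and uniqueness of the micro solution are in place.
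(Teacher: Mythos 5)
Your proposal is correct and follows essentially the same route as the paper: the key step in both is the corrector representation $u^m = u_H + \sum_k \psi^k\,\frac{\p u_H}{\p x_k}$ (valid because $u_H$ is already linear on $K_{\delta,i}$, cf.\ \rmkref{rem:no-linearization}), after which the identity reduces to the tensor representations of \propref{prop:rec-homojk}. The only cosmetic difference is that the paper first uses the Galerkin orthogonality $a^{K_{\delta,i}}_h(u^m_h, v^m_h - v_H)=0$ to replace $\grad v^m_h$ by $\grad v_H$ and then expands only $u^m_h$ against the definition \eqref{eq:recovered-homogenized-tensor}, whereas you expand both micro solutions and invoke the $\grad\varphi^k\cdot\grad\varphi^j$ form of \propref{prop:rec-homojk}; your explicit justification of the representation via linearity and uniqueness of the micro solution map is a detail the paper leaves implicit.
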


\begin{proof}
We will show \eqref{eq:uHvH} only, as the proof of \eqref{eq:uHvH-2}
follows essentially by the same arguments.
Since $u^m_h$ is the solution of \eqref{eq:micro-problem-b},
and $v^m_h-v_H \in \tH_h(K_{\delta,i})$, it holds
\begin{align}\label{eq:macro-1}
 \delta^{-d} \sum_{K_h \in \Tau_h} \int_{K_h} \mbA^\vep \grad u^m_h \cdot \grad v^m_h \dx
=  \delta^{-d} \sum_{K_h \in \Tau_h} \int_{K_h} \mbA^\vep \grad u^m_h \cdot \grad v_H \dx.
\end{align}
Since $\grad u_H$ is constant, $u^m_h$
is represented by a linear combination of the basis functions
$\psi^j_h$ as
$
  u^m_h = u_H + \sum_{j=1}^d \psi^j_h \ \frac{\p u_H}{\p x_j}.
$
By plugging this representation into \eqref{eq:macro-1}, we have
\begin{align*}
 \delta^{-d} & \sum_{K_h \in \Tau_h} \int_{K_h} \mbA^\vep 
 \Big( \grad u_H + \sum_{j=1}^d \grad \psi^j_h \ \frac{\p u_H}{\p x_j} \Big) 
 \cdot \grad v_H  \dx \\
=& 
 	\delta^{-d} \sum_{K_h \in \Tau_h} \int_{K_h} \mbA^\vep 
 	\left( I + J_{\psi_h}^T \right) \grad u_H \cdot \grad v_H \dx
=
	\mbA^0_{K_{\delta,i}} \grad u_H \cdot \grad v_H.
\end{align*}
This completes the proof.
\end{proof}

\begin{remark}
\propref{prop:recovered-homo} implies that $\mbA^0_{K_{\delta,i}}$
indeed plays a role as the homogenized tensor on each sampling domain $K_{\delta,i}$ numerically.
\end{remark}

\subsection{The case of periodic coupling}
In this section, main ingredients of error analysis are
provided under periodic assumptions.
Recall the definitions of $\psi^j$ and $\varphi^j$ in \eqref{eq:psi}
and \eqref{eq:phi}.
 The following two assumptions will be
taken into consideration in this section.
\begin{assumption}[Periodic coupling]\label{ass:periodic}
\
\begin{enumerate}
\item $\mbA^\vep(\mbx) := \mbA(\mbx,\frac{\mbx}{\vep})$ where $\mbA(\mbx,\cdot)$ is $Y$-periodic with $Y =
  [0,1]^d$ and $\mbA(\mbx,\cdot) \in \mbW^{1,\infty}(Y)$.
\item On each sampling domain $K_{\delta,i}$, the solution of the
  micro problem
  \eqref{eq:psi} with periodic coupling \eqref{def:W} has regularity
  $\psi^j \in H^2(K_{\delta,i})$ and
  $\mbA^\vep \grad \varphi^j \in [H^1(K_{\delta,i})]^d$.
\end{enumerate}
\end{assumption}

First, we note the following lemmas. The proofs follow the main framework and are omitted.
\begin{lemma}\label{lemma:divflux} Under \assref{ass:periodic},
$\div \left( \mbA^\vep \grad \varphi^j \right) =0 $ on $K_{\delta,i}$
  {\it a.e.}
\end{lemma}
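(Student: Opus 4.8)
The plan is to test the defining weak equation \eqref{eq:phi-phi_h} for $\varphi^j$ against compactly supported functions and then invoke the fundamental lemma of the calculus of variations. The regularity $\mbA^\vep \grad \varphi^j \in [H^1(K_{\delta,i})]^d$ furnished by \assref{ass:periodic} is exactly what renders $\div(\mbA^\vep \grad \varphi^j)$ a genuine $L^2(K_{\delta,i})$ function, so that the claimed a.e. identity is meaningful and the integration by parts below is licit.

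First I would dispose of the mean-zero constraint built into the periodic test space. Although the admissible space in the periodic case \eqref{def:W} is $\tH^1_{\#}(K_{\delta,i})$, consisting of $K_{\delta,i}$-periodic functions of vanishing mean, this restriction is harmless. Indeed, for any $z \in C^\infty_c(K_{\delta,i})$, its periodic extension belongs to $C^\infty_{\#}(K_{\delta,i})$ since $z$ vanishes in a neighbourhood of $\p K_{\delta,i}$; hence $z - \avint_{K_{\delta,i}} z \in \tH^1_{\#}(K_{\delta,i})$, and it has the same gradient as $z$. Substituting $z - \avint_{K_{\delta,i}} z$ into \eqref{eq:phi-phi_h} and using $\grad\big(z - \avint_{K_{\delta,i}} z\big) = \grad z$ yields
\[
\int_{K_{\delta,i}} \mbA^\vep \grad \varphi^j \cdot \grad z \dx = 0 \qquad \forany z \in C^\infty_c(K_{\delta,i}).
\]

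Next I would integrate by parts. Since $\mbA^\vep \grad \varphi^j \in [H^1(K_{\delta,i})]^d$ and $z$ has compact support, the boundary contribution drops out and Green's identity gives
\[
\int_{K_{\delta,i}} \div(\mbA^\vep \grad \varphi^j)\, z \dx = 0 \qquad \forany z \in C^\infty_c(K_{\delta,i}).
\]
Because $\div(\mbA^\vep \grad \varphi^j) \in L^2(K_{\delta,i}) \subset L^1_{loc}(K_{\delta,i})$, the fundamental lemma of the calculus of variations then forces $\div(\mbA^\vep \grad \varphi^j) = 0$ a.e. on $K_{\delta,i}$, which is the assertion.

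I do not anticipate a genuine obstacle; the two points that merit care are precisely the ones highlighted above, namely that passing to mean-zero periodic test functions costs nothing (true because subtracting a constant leaves the gradient unchanged, and compactly supported functions are automatically periodic with zero boundary trace), and that the integration by parts is valid, which holds exactly because of the $H^1$-regularity of the flux assumed in \assref{ass:periodic}. One could equally phrase the final step as a density argument for $C^\infty_c(K_{\delta,i})$, but the fundamental lemma is the most direct route.
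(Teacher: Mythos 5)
Your argument is correct and is precisely the standard argument that the paper itself omits (it remarks only that ``the proofs follow the main framework and are omitted''). The two points you single out---that any $z\in C^\infty_c(K_{\delta,i})$ becomes an admissible test function in $\tH^1_{\#}(K_{\delta,i})$ after subtracting its mean, without changing its gradient, and that the regularity $\mbA^\vep\grad\varphi^j\in[H^1(K_{\delta,i})]^d$ from \assref{ass:periodic} makes $\div(\mbA^\vep\grad\varphi^j)$ an honest $L^2$ function so that the integration by parts and the fundamental lemma apply---are exactly the places where care is needed, and you handle both correctly.
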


\begin{lemma}\label{lem:micro-regularity}
Under \assref{ass:periodic}, it holds
\begin{align}
|\mbA^\vep \grad \varphi^j|_{1,K_{\delta,i}} 
\le C 
	\delta^{\frac{d}{2}} \vep^{-1}.
\end{align}
\end{lemma}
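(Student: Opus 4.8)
The plan is to reduce the estimate to a single, scale-independent regularity bound on a fixed reference cell, extracting exactly one power of $\vep^{-1}$ along the way. Write $\mbF := \mbA^\vep \grad \varphi^j$, which by \assref{ass:periodic}(2) belongs to $[H^1(K_{\delta,i})]^d$, so the quantity to be controlled is $|\mbF|_{1,K_{\delta,i}} = \|\grad \mbF\|_{0,K_{\delta,i}}$. The underlying difficulty is that the second derivatives of $\psi^j$ genuinely blow up like $\vep^{-1}$; no estimate carried out directly on $K_{\delta,i}$ can avoid this scale, so the whole point is to isolate that single factor and show the remaining constant is independent of $\vep$ and $\delta$.

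First I would rescale to the fast variable. Set $\mby = (\mbx - \mbx_i)/\vep$, which maps $K_{\delta,i}$ onto the cube $\hat K := [-\delta/(2\vep),\delta/(2\vep)]^d$ of side $\delta/\vep = O(1)$, since $\delta$ is comparable to $\vep$. Define the rescaled coefficient and potential by $\hat{\mbA}(\mby) := \mbA(\vep\mby + \mbx_i, \mby)$ and $\hat\varphi(\mby) := \vep^{-1}\varphi^j(\vep\mby + \mbx_i)$, so that $\grad_\mby \hat\varphi = \grad_\mbx \varphi^j$ and the rescaled flux $\hat{\mbF}(\mby) := \hat{\mbA}\,\grad_\mby\hat\varphi$ equals $\mbF(\vep\mby + \mbx_i)$. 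The essential gain is that $\hat{\mbA}$ retains the ellipticity constants $\lambda,\Lambda$ and is uniformly Lipschitz on $\hat K$: by the chain rule its $\mby$-gradient equals $\vep\,(\grad_\mbx\mbA) + (\grad_\mby\mbA)$, whose second term is bounded by the $W^{1,\infty}(Y)$ norm from \assref{ass:periodic}(1) and whose first term carries a harmless factor $\vep$. The rescaled corrector $\hat\chi := \hat\varphi - y_j$ solves the standard periodic cell problem $\int_{\hat K}\hat{\mbA}\,\grad_\mby\hat\varphi\cdot\grad_\mby\hat z\,d\mby = 0$ for all periodic $\hat z$, whose Euler--Lagrange form $\nabla_\mby\cdot(\hat{\mbA}\,\grad_\mby\hat\varphi)=0$ is exactly the rescaling of \lemref{lemma:divflux}.

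Next I would apply standard elliptic regularity on this fixed configuration. Because $\hat K$ is a cube, $\hat{\mbA}$ is uniformly elliptic and uniformly Lipschitz, and the boundary condition is periodic, the usual difference-quotient ($H^2$) a priori estimate applied to $-\nabla_\mby\cdot(\hat{\mbA}\,\grad_\mby\hat\chi) = \nabla_\mby\cdot(\hat{\mbA}\mbe_j) \in L^2(\hat K)$ yields $\|\hat\chi\|_{H^2(\hat K)} \le C$, with $C$ depending only on $\lambda,\Lambda$ and the $W^{1,\infty}$ bound of $\hat{\mbA}$, hence independent of $\vep$. Consequently $|\hat{\mbF}|_{1,\hat K} \le C\,\|\hat{\mbA}\|_{W^{1,\infty}(\hat K)}\,\|\hat\varphi\|_{H^2(\hat K)} \le C$, again uniformly in $\vep$.

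Finally I would scale back. The change of variables $\mbx = \vep\mby + \mbx_i$ gives $\grad_\mbx\mbF = \vep^{-1}\grad_\mby\hat{\mbF}$ together with $d\mbx = \vep^d\,d\mby$, so that $|\mbF|_{1,K_{\delta,i}} = \vep^{(d-2)/2}\,|\hat{\mbF}|_{1,\hat K} \le C\,\vep^{d/2-1}$. Using $\delta \sim \vep$ to replace $\vep^{d/2}$ by a constant multiple of $\delta^{d/2}$ delivers $|\mbF|_{1,K_{\delta,i}} \le C\,\delta^{d/2}\vep^{-1}$, as claimed. The one genuinely delicate step is securing the uniform constant in the $H^2$ estimate: this is precisely where periodicity and the $W^{1,\infty}(Y)$ regularity of $\mbA$ are indispensable, and it is the reason the argument must live on the rescaled cell $\hat K$ rather than on $K_{\delta,i}$ itself.
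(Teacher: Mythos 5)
Your argument is essentially the proof that the paper itself omits: the text states only that ``the proofs follow the main framework and are omitted,'' deferring to the cited HMM literature (E--Ming--Zhang, Abdulle), where exactly this rescaling-to-the-fast-variable argument combined with $H^2$ regularity of the periodic cell problem is used. So the route is the standard one and the core reasoning is sound.

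One bookkeeping point deserves correction. You obtain $|\hat{\mbF}|_{1,\hat K}\le C$ uniformly and only recover the stated bound $C\,\delta^{d/2}\vep^{-1}$ by invoking $\delta\sim\vep$ at the very end; but the paper allows $\delta/\vep$ to be a (possibly large) integer for periodic coupling, and even $\delta=\sqrt{\vep}$ in the Dirichlet experiments, so $\delta/\vep$ need not be $O(1)$. The clean way to see the constant is that $\hat K$ consists of $(\delta/\vep)^d$ copies of the periodicity cell $Y$, so periodicity of $\hat\chi$ gives $|\hat{\mbF}|_{1,\hat K}\le C\,(\delta/\vep)^{d/2}$ with $C$ controlled by the $H^2(Y)$ estimate on a single cell; combining with your scaling identity $|\mbF|_{1,K_{\delta,i}}=\vep^{d/2-1}|\hat{\mbF}|_{1,\hat K}$ then yields $C\,\delta^{d/2}\vep^{-1}$ directly, with no assumption that $\delta$ and $\vep$ are comparable. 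A second, smaller caveat: your chain-rule bound on $\grad_\mby\hat{\mbA}$ uses boundedness of $\grad_\mbx\mbA$ in the slow variable, which \assref{ass:periodic} does not state explicitly (it only gives $\mbA(\mbx,\cdot)\in\mbW^{1,\infty}(Y)$); this is harmless if, as is customary in this framework, the slow variable is collocated at $\mbx_i$ on each sampling domain, but it should be said.
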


The following proposition is a discretization error estimation of the micro basis function $\psi^j$.

\begin{proposition}\label{prop:basis-micro-error}
Under \assref{ass:periodic}, it holds
\begin{align}\label{eq:basis-micro-error}
\brknmh{\psi^j - \psi^j_h}
& \le C 
	h \delta^{\frac{d}{2}} \vep^{-1}.
\end{align}  
\end{proposition}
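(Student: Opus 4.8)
The plan is to treat \eqref{eq:psi_h} as a nonconforming discretization of \eqref{eq:psi} and to run a second Strang-type argument in the broken norm $\brknmh{\cdot}$. Let $I_h\psi^j\in\tH_h(K_{\delta,i})$ denote the canonical $P_1$--NC interpolant of $\psi^j$ (defined through the face--mean degrees of freedom); by the companion paper \cite{yim-sheen-p1nc-per} this interpolant respects the constraints defining $V^{P_1}_{h,\#}$, so it indeed lands in $\tH_h(K_{\delta,i})$, and it obeys the standard estimate $\brknmh{\psi^j-I_h\psi^j}\le C h\,|\psi^j|_{2,K_{\delta,i}}$. Writing $\psi^j-\psi^j_h=(\psi^j-I_h\psi^j)+(I_h\psi^j-\psi^j_h)$, it suffices to control the discrete remainder $w_h:=\psi^j_h-I_h\psi^j\in\tH_h(K_{\delta,i})$. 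Since $a^{K_{\delta,i}}_h$ is coercive on $\tH_h(K_{\delta,i})$ with constant $\lambda$ by the uniform ellipticity of $\mbA^\vep$, I would start from $\lambda\,\brknmh{w_h}^2\le a^{K_{\delta,i}}_h(w_h,w_h)$ and insert the two defining equations.

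Using \eqref{eq:psi_h} together with $\mbe_j=\grad x_j$ and $x_j+I_h\psi^j=\varphi^j-(\psi^j-I_h\psi^j)$ via \eqref{eq:phi}, the residual becomes
\[
a^{K_{\delta,i}}_h(w_h,w_h)=-\sum_{K_h\in\Tau_h}\int_{K_h}\mbA^\vep\grad\varphi^j\cdot\grad w_h\dx+\sum_{K_h\in\Tau_h}\int_{K_h}\mbA^\vep\grad(\psi^j-I_h\psi^j)\cdot\grad w_h\dx.
\]
The second sum is the harmless approximation term: boundedness of $\mbA^\vep$ bounds it by $C\,\brknmh{\psi^j-I_h\psi^j}\,\brknmh{w_h}\le C h\,|\psi^j|_{2,K_{\delta,i}}\,\brknmh{w_h}$. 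The first sum is the consistency error $E$, and it is the crux of the proof.

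For $E$ I would integrate by parts on every micro element $K_h$. By \lemref{lemma:divflux} we have $\div(\mbA^\vep\grad\varphi^j)=0$ a.e., so the volume contributions drop and only face terms survive, i.e. $E=\sum_{K_h}\int_{\p K_h}(\mbA^\vep\grad\varphi^j\cdot\bnu_{K_h})\,w_h\,\dsig$. Because \assref{ass:periodic} forces $\mbA^\vep\grad\varphi^j\in[H^1(K_{\delta,i})]^d$, its normal trace is single--valued across each interior face, and regrouping recasts $E$ as a sum over $\tau\in\mcF_h^i$ of $\int_\tau(\mbA^\vep\grad\varphi^j\cdot\bnu_\tau)\,[w_h]_\tau\,\dsig$ plus boundary contributions. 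Here the nonconforming structure enters decisively: every $w_h\in\tH_h(K_{\delta,i})$ satisfies $\<[w_h]_\tau,1\>_\tau=0$ on interior faces, so I may subtract the face--mean $\overline{(\cdot)}^\tau$ of the flux for free and estimate $\int_\tau(\mbA^\vep\grad\varphi^j\cdot\bnu_\tau-\overline{(\cdot)}^\tau)\,[w_h]_\tau\,\dsig$ by Cauchy--Schwarz. A standard scaled trace/Poincar\'e inequality on each face then gives a per--face bound of order $h$ times the local $H^1$--seminorm of $\mbA^\vep\grad\varphi^j$ against the local broken seminorm of $w_h$; summing and invoking \lemref{lem:micro-regularity} yields $|E|\le C h\,|\mbA^\vep\grad\varphi^j|_{1,K_{\delta,i}}\,\brknmh{w_h}\le C h\,\delta^{d/2}\vep^{-1}\,\brknmh{w_h}$.

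Combining the two contributions gives $\brknmh{w_h}\le C h(\delta^{d/2}\vep^{-1}+|\psi^j|_{2,K_{\delta,i}})$, and since $\mbA^\vep\in\mbW^{1,\infty}$ with $|\grad\mbA^\vep|_{L^\infty}\le C\vep^{-1}$ one has $|\psi^j|_{2,K_{\delta,i}}=|\varphi^j|_{2,K_{\delta,i}}\le C(|\mbA^\vep\grad\varphi^j|_{1,K_{\delta,i}}+\vep^{-1}|\varphi^j|_{1,K_{\delta,i}})\le C\delta^{d/2}\vep^{-1}$, the last step using \lemref{lem:micro-regularity} and the energy bound $|\varphi^j|_{1,K_{\delta,i}}\le C\delta^{d/2}$. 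The triangle inequality then closes \eqref{eq:basis-micro-error}. I expect the genuine obstacle to be the consistency term $E$: specifically justifying the single--valued normal trace of the flux and, above all, handling the boundary faces $\mcF_h^b$, which in the periodic setting must be paired through $\mcF_h^{b,opp}$, using periodicity of $\mbA^\vep\grad\varphi^j$ together with the equal--mean property built into $V^{P_1}_{h,\#}$ so that opposite--face contributions combine exactly as the interior jumps do.
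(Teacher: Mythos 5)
Your proposal is correct and follows essentially the same route as the paper: the paper invokes the Second Strang Lemma directly (which you simply unfold via the interpolant and coercivity), bounds the best-approximation term by $Ch|\psi^j|_{2,K_{\delta,i}}$, and treats the consistency functional exactly as you do --- elementwise integration by parts, \lemref{lemma:divflux} to kill the volume terms, subtraction of face means of the flux using the mean-zero jump property, a trace/Bramble--Hilbert estimate per face, and \lemref{lem:micro-regularity} to conclude. Your explicit attention to the single-valuedness of the normal trace and to the pairing of opposite boundary faces in the periodic case is a point the paper leaves implicit, but it does not change the argument.
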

\begin{proof}
The Second Strang Lemma \cite{strang-fix, braess} for the micro problems \eqref{eq:psi_h} and \eqref{eq:psi} implies that
\begin{align*}
\brknmh{\psi^j - \psi^j_h}
& \le C 
	\Big( \inf_{v_h \in \tH_h(K_{\delta,i})} \brknmh{\psi^j - v_h}
  \\
  &\qquad
	+ \sup_{w_h \in \tH_h(K_{\delta,i})} \frac{\big|a^{K_{\delta,i}}_h(\psi^j, w_h) - a^{K_{\delta,i}}_h(\psi^j_h, w_h)\big|}{\brknmh{w_h}} \Big).
\end{align*}
The first term represents the best approximation error of $\psi^j,$ 
which is of $\Oh(h)$ due to the standard approximation property of nonconforming element spaces. 
The second term, so-called the consistency error, measures nonconformity of the finite element space.
Denoting  by $L(w_h)$ the numerator of the second term, from the
definitions \eqref{eq:psi_h} and \eqref{eq:phi} of $\psi^j_h$ and $\varphi^j$ one sees that
\begin{align*}
\left| L(w_h) \right| 
& = 
	\Big| \sum_{K_h \in \Tau_h} \int_{K_h} \mbA^\vep \grad \psi^j \cdot \grad w_h \dx 
  +\sum_{K_h \in \Tau_h} \int_{K_h} \mbA^\vep \mbe_j \cdot \grad w_h \dx \Big| \\
& = 
	\Big| \sum_{K_h \in \Tau_h} \int_{K_h} \mbA^\vep \grad \varphi^j \cdot \grad w_h \dx \Big| \\
& = 
	\Big| \sum_{K_h \in \Tau_h} \int_{\p K_h} \bnu_{K_h} \cdot \left(\mbA^\vep \grad \varphi^j \right) w_h \dsig
	- \sum_{K_h \in \Tau_h} \int_{K_h} \div \left( \mbA^\vep
  \grad \varphi^j \right) w_h \dx \Big|.
  \end{align*}
Due to \lemref{lemma:divflux}, the above equation is bounded by only the first term such that
\begin{align*}
	\Big| \sum_{K_h \in \Tau_h} \int_{\p K_h} \bnu_{K_h}
	\cdot \mbA^\vep \grad \varphi^j w_h \dsig \Big|
& = 
	\Big| \sum_{K_h \in \Tau_h} \sum_{\tau\in \mcF(K_h)}\int_{\tau} \bnu_{K_h} \cdot
	\mbA^\vep \grad \varphi^j w_h \dsig \Big| \\
& \le
	\sum_{\tau\in\mcF_h} \Big| \int_\tau \bnu_{\tau} \cdot
	\mbA^\vep \grad \varphi^j [w_h]_\tau \dsig \Big|.
\end{align*}
Thanks to the regularity in \assref{ass:periodic}, the Bramble--Hilbert lemma,
and the trace theorem, one sees that
\begin{align*}
\int_{\tau} \bnu_\tau \cdot \mbA^\vep \grad \varphi^j [w_h]_\tau \dsig
& = 
	\int_{\tau} \bnu_\tau \cdot \mbA^\vep \grad \varphi^j \big[ w_h - \avint_{\tau} w_h \big]_\tau \dsig \\
& = 
	\int_{\tau} \bnu_\tau \cdot \big[ \mbA^\vep \grad \varphi^j - \avint_{\tau} \big(\mbA^\vep \grad \varphi^j \big) \big]
	\big[ w_h - \avint_{\tau} w_h \big]_\tau \dsig \\
& \le 
	\big[ \int_{\tau} \big| \mbA^\vep \grad \varphi^j - \avint_{\tau} \left(\mbA^\vep \grad \varphi^j \right) \big|^2 \dsig \big]^{\frac{1}{2}}
	\big[ \int_{\tau} \big| w_h -  \avint_{\tau} w_h
  \big|^2 \dsig \big]^{\frac{1}{2}}\\
& \le Ch^{\frac12} \left| \mbA^\vep\grad \varphi^j \right|_{1/2,\tau}
h^{\frac12}  \left| w_h \right|_{1/2,\tau}\\
  &\le Ch \left| \mbA^\vep \grad \varphi^j \right|_{1,K_h}
    	 \left| w_h \right|_{1,K_h},\quad \tau\in \mcF(K_h).
\end{align*}
Consequently, we have
\begin{align*}
\left| L(w_h) \right|
& \le C
	\sum_{K_h \in \Tau_h} h |\mbA^\vep \grad \varphi^j|_{1,K_h} 
	| w_h |_{1,K_h}\\
& \le C
	h \big[ \sum_{K_h \in \Tau_h} |\mbA^\vep \grad \varphi^j|_{1,K_h}^2 \big]^{1/2} 
	\big[ \sum_{K_h \in \Tau_h} | w_h |^2_{1,K_h} \big]^{1/2} \\
& = C
	h |\mbA^\vep \grad \varphi^j|_{1,K_{\delta,i}} \brknmh{w_h}.
\end{align*}
Finally, \lemref{lem:micro-regularity} and the regularity result imply the desired error estimate:
\begin{align*}
\brknmh{\psi^j - \psi^j_h}
& \le C 
	h \left( |\psi^j|_{2,K_{\delta,i}} + |\mbA^\vep \grad \varphi^j|_{1,h,K_{\delta,i}} \right)
 \le C 
	h \delta^{\frac{d}{2}} \vep^{-1}.
\end{align*}
\end{proof}

The following proposition shows difference between the recovered homogenized tensors.
\begin{proposition}\label{prop:recovered-tensor-micro-error}
Under \assref{ass:periodic}, the following holds:
\begin{align}\label{eq:recovered-tensor-micro-error}
\sup_{K_{\delta,i}} \| \overline{\mbA}^0_{K_{\delta,i}}-\mbA^0_{K_{\delta,i}} \|_2
\le C 
	\Big( \frac{h}{\vep} \Big)^2,
\end{align}
where $\|\cdot\|_2$ denotes the matrix 2--norm.
\end{proposition}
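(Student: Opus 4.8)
The plan is to reduce the entrywise difference to a single ``difference of corrector energies'' and then split that into a genuinely quadratic term controlled by \propref{prop:basis-micro-error}, together with consistency terms controlled as in its proof. Fix a sampling domain $K_{\delta,i}$, abbreviate $a:=a^{K_{\delta,i}}$ and $a_h:=a^{K_{\delta,i}}_h$, and set $e^j:=\psi^j-\psi^j_h$, so that \propref{prop:basis-micro-error} gives $\brknmh{e^j}\le Ch\delta^{d/2}\vep^{-1}$. First I would record the identity
\begin{align*}
\delta^d\big[(\overline A^0_{K_{\delta,i}})_{jk}-(A^0_{K_{\delta,i}})_{jk}\big]
= a_h(\psi^k_h,\psi^j_h)-a(\psi^k,\psi^j).
\end{align*}
This follows from \propref{prop:rec-homojk}: expanding $\varphi^k=x_k+\psi^k$ and $\varphi^j=x_j+\psi^j$ in $a(\varphi^k,\varphi^j)$ and using $a(\varphi^k,\psi^j)=a(\varphi^j,\psi^k)=0$ from \eqref{eq:phi-phi_h} yields $\delta^d(\overline A^0)_{jk}=a(x_k,x_j)-a(\psi^k,\psi^j)$; the identical computation with the discrete relation in \eqref{eq:phi-phi_h} and $a_h(x_k,x_j)=a(x_k,x_j)$ yields $\delta^d(A^0)_{jk}=a(x_k,x_j)-a_h(\psi^k_h,\psi^j_h)$. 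Subtracting proves the claim.

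Next I would decompose this difference of energies. Since $a(\psi^k,\psi^j)=a_h(\psi^k,\psi^j)$ (both arguments lie in $H^1$), telescoping together with the two orthogonalities \eqref{eq:phi-phi_h} gives
\begin{align*}
a_h(\psi^k_h,\psi^j_h)-a(\psi^k,\psi^j)
= -a_h(e^k,e^j)-a_h(\varphi^j,\psi^k_h)-a_h(\varphi^k,\psi^j_h),
\end{align*}
where I have used $a_h(e^k,\psi^j_h)=a_h(\varphi^k,\psi^j_h)$ and $a_h(\psi^k,e^j)=a_h(e^k,e^j)+a_h(\varphi^j,\psi^k_h)$, both of which follow by adding and subtracting the discrete corrector and invoking $a_h(\varphi^k_h,z_h)=0$. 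The first term is directly quadratic: by boundedness of $\mbA^\vep$ and \propref{prop:basis-micro-error},
\begin{align*}
|a_h(e^k,e^j)|\le\Lambda\,\brknmh{e^k}\,\brknmh{e^j}\le Ch^2\delta^{d}\vep^{-2}.
\end{align*}
By symmetry in $j,k$ it then remains to bound the consistency term $a_h(\varphi^j,\psi^k_h)$ by the same quantity.

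For the consistency term I would integrate by parts over each micro element. Because $\mbA^\vep\grad\varphi^j$ is divergence free (\lemref{lemma:divflux}) and single valued across interior faces,
\begin{align*}
a_h(\varphi^j,\psi^k_h)
= \sum_{K_h\in\Tau_h}\int_{K_h}(\mbA^\vep\grad\varphi^j)\cdot\grad\psi^k_h\dx
= \sum_{\tau\in\mcF_h}\int_\tau \bnu_\tau\cdot(\mbA^\vep\grad\varphi^j)\,[\psi^k_h]_\tau\dsig,
\end{align*}
the boundary faces being treated through the periodic (resp. zero) face-mean constraints built into $\tH_h=V^{P_1}_{h,\#}$ (resp. $V^{P_1}_{h,0}$), exactly as for the opposite-face pairs $\mcF_h^{b,opp}$ of the companion paper. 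This is precisely the expression estimated in the proof of \propref{prop:basis-micro-error}, so the same machinery (subtracting the face mean of $\bnu_\tau\cdot\mbA^\vep\grad\varphi^j$ using $\<[\psi^k_h]_\tau,1\>_\tau=0$, then Cauchy--Schwarz, the trace theorem and the Bramble--Hilbert lemma) applies. The crucial refinement is that $\psi^k$ is continuous, whence $[\psi^k_h]_\tau=[\psi^k_h-\psi^k]_\tau$; estimating this jump through the nonconforming interpolant of $\psi^k$ replaces the corrector norm $\brknmh{\psi^k_h}=\Oh(\delta^{d/2})$ by the error norm $\brknmh{e^k}+h|\psi^k|_{2,K_{\delta,i}}=\Oh(h\delta^{d/2}\vep^{-1})$, gaining one factor $h/\vep$. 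Thus
\begin{align*}
|a_h(\varphi^j,\psi^k_h)|
\le Ch\,|\mbA^\vep\grad\varphi^j|_{1,K_{\delta,i}}\big(\brknmh{e^k}+h|\psi^k|_{2,K_{\delta,i}}\big)
\le Ch^2\delta^{d}\vep^{-2}
\end{align*}
by \lemref{lem:micro-regularity} and \assref{ass:periodic}.

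Collecting the three bounds yields $|(\overline A^0_{K_{\delta,i}}-A^0_{K_{\delta,i}})_{jk}|\le C(h/\vep)^2$ for each $j,k$, and hence $\|\overline A^0_{K_{\delta,i}}-A^0_{K_{\delta,i}}\|_2\le C(h/\vep)^2$ with $C$ independent of the sampling domain; taking the supremum gives \eqref{eq:recovered-tensor-micro-error}. I expect the consistency estimate to be the main obstacle: the bound inherited directly from \propref{prop:basis-micro-error} is only $\Oh(h\delta^{d}\vep^{-1})$, i.e. first order, so reaching the quadratic rate hinges on exploiting that the face jumps are jumps of the discrete corrector about the \emph{smooth} corrector $\psi^k$ (which supplies the extra factor $h/\vep$), and on verifying that the periodic boundary-face pairs enjoy the same gain.
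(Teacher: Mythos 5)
Your proposal is correct and follows essentially the same route as the paper: Proposition~\ref{prop:rec-homojk} reduces the entrywise difference to a difference of corrector energies, which is split into a quadratic term controlled by $\brknmh{\psi^j-\psi^j_h}^2$ via Proposition~\ref{prop:basis-micro-error} and two cross terms handled by integration by parts, Lemma~\ref{lemma:divflux}, and the zero-mean face-jump machinery applied to the \emph{error} $\psi^k_h-\psi^k$ rather than to $\psi^k_h$ itself, which is exactly the source of the extra factor $h/\vep$ in the paper's argument as well. The only cosmetic difference is that the paper expands $a(\varphi^k_h,\varphi^j_h)-a(\varphi^k,\varphi^j)$ directly instead of passing through the identity $\delta^d(\overline A^0)_{jk}=a(x_k,x_j)-a(\psi^k,\psi^j)$; the three resulting terms and their estimates coincide with yours.
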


\begin{proof}
Invoking  \propref{prop:rec-homojk}, we have
\begin{align*}
&\big|\big(\mbA^0_{K_{\delta,i}}\big)_{jk}-\big(\overline{\mbA}^0_{K_{\delta,i}}\big)_{jk} \big| \\
& = 
	\Big| \delta^{-d} \sum_{K_h \in \Tau_h} \int_{K_h}
	\mbA^\vep \grad \varphi^k_h \cdot \grad \varphi^j_h 
	- \mbA^\vep \grad \varphi^{k} \cdot \grad \varphi^{j} \dx \Big| \\
& = 
	\Big| \delta^{-d} \sum_{K_h \in \Tau_h} \int_{K_h}
	\mbA^\vep \grad \big( \varphi^k_h - \varphi^k \big) 
	\cdot \grad \big( \varphi^j_h - \varphi^j \big) \nonumber \\
	& \qquad \qquad \qquad \qquad \qquad + \mbA^\vep \grad \varphi^k \cdot \grad \big( \varphi^j_h - \varphi^j \big) 
	+ \mbA^\vep \grad \big( \varphi^k_h - \varphi^k \big) \cdot \grad \varphi^j \dx \Big| \\
& \le 
	\delta^{-d} \Big| \sum_{K_h \in \Tau_h} \int_{K_h} 
	\mbA^\vep \grad \big( \varphi^k_h - \varphi^k \big) \cdot
  \grad \big( \varphi^j_h - \varphi^j \big) \dx \Big| \nonumber \\
	& \quad + \delta^{-d} \Big| \sum_{K_h \in \Tau_h} -
\int_{K_h} 
	\div \big( \mbA^\vep \grad \varphi^k \big) \big( \varphi^j_h - \varphi^j \big) \dx 
	+ \int_{\p K_h} \bnu_{K_h} \cdot \mbA^\vep \grad \varphi^k \big( \varphi^j_h - \varphi^j \big) \dsig \Big| \nonumber \\
	& \quad + \delta^{-d} \Big| \sum_{K_h \in \Tau_h} -
\int_{K_h} \div \big( \mbA^\vep \grad \varphi^j \big) \big( \varphi^k_h - \varphi^k \big) \dx 
	+ \int_{\p K_h} \bnu_{K_h} \cdot \mbA^\vep \grad \varphi^j \big(\varphi^k_h - \varphi^k \big) \dsig \Big|.
\end{align*}
The last inequality is due to an integration by parts and the symmetry
of $\mbA^\vep$. By \lemrefs{lemma:divflux}{lem:micro-regularity},
and \propref{prop:basis-micro-error} with proper modifications, the
terms in the last inequality is further bounded by
\begin{align*}
& \le 
	\frac{\Lambda}{\delta^d} \brknmh{\varphi^k_h - \varphi^k} \brknmh{\varphi^j_h - \varphi^j} \nonumber \\
	& \qquad + \delta^{-d} \Big| \sum_{K_h \in \Tau_h} 
	\int_{\p K_h} \bnu_{K_h} \cdot \mbA^\vep \grad \varphi^k \big(\varphi^j_h - \varphi^j \big) \dsig \Big| \nonumber \\
	& \qquad + \delta^{-d} \Big| \sum_{K_h \in \Tau_h}
	\int_{\p K_h} \bnu_{K_h} \cdot \mbA^\vep \grad \varphi^j \big(\varphi^k_h - \varphi^k \big) \dsig \Big| \\
& \le 
	\frac{\Lambda}{\delta^d} \brknmh{\varphi^k_h - \varphi^k} \brknmh{\varphi^j_h - \varphi^j} \nonumber \\
	& \qquad + \delta^{-d} h |\mbA^\vep \grad \varphi^k|_{1,K_{\delta,i}} \brknmh{\varphi^j_h - \varphi^j}
	+ \delta^{-d} h |\mbA^\vep \grad \varphi^j|_{1,K_{\delta,i}} \brknmh{\varphi^k_h - \varphi^k} \\
& \le C 
	\Big(\frac{h}{\vep}\Big)^2.
\end{align*}
This completes the proof.
\end{proof}

\subsection{The case of Dirichlet coupling}

In this subsection we consider the following assumptions
and the Dirichlet coupling condition for micro problems.

\begin{assumption}[Dirichlet coupling]\label{ass:dirichlet}
\
\begin{enumerate}
\item $\mbA^\vep(\mathbf{x}) \in \tH^{1,\infty}(K_H)$ with $|A^\vep_{jk}|_{0,\infty,K_H} \le C$ and $|\grad
A^\vep_{jk}|_{0,\infty,K_H} \le C / \vep$ for all $K_H \in \Tau_H$.
\item On each sampling domain $K_{\delta,i}$, the solution of the micro problem \eqref{eq:psi} with Dirichlet coupling \eqref{def:W} has regularity $\psi^j \in H^2(K_{\delta,i})$ and $\mbA^\vep \grad \varphi^j \in [H^1(K_{\delta,i})]^2$.
\end{enumerate}
\end{assumption}

The same arguments as in the case of periodic coupling in Section 4.3
leads to the corresponding results to
\proprefs{prop:basis-micro-error}{prop:recovered-tensor-micro-error}
hold under \assref{ass:dirichlet}. Omitting the details of proof, we
state the results as follows:

\begin{proposition}\label{prop:basis-micro-error-diri}
Under \assref{ass:dirichlet}, it holds
\begin{align}\label{eq:basis-micro-error-diri}
\brknmh{\psi^j - \psi^j_h}
& \le C 
	h \delta^{\frac{d}{2}} \vep^{-1}.
\end{align}  
\end{proposition}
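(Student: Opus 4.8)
The plan is to reproduce the proof of \propref{prop:basis-micro-error} line for line, isolating the two points where periodicity entered and replacing them with the Dirichlet structure. I would start from the Second Strang Lemma applied to the micro problems \eqref{eq:psi_h} and \eqref{eq:psi},
\[
\brknmh{\psi^j-\psi^j_h}\le C\inf_{v_h\in\tH_h(K_{\delta,i})}\brknmh{\psi^j-v_h}+C\sup_{w_h\in\tH_h(K_{\delta,i})}\frac{|L(w_h)|}{\brknmh{w_h}},
\]
with $L(w_h)=a^{K_{\delta,i}}_h(\psi^j,w_h)-a^{K_{\delta,i}}_h(\psi^j_h,w_h)$. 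The first (approximation) term is $\Oh(h)\,|\psi^j|_{2,K_{\delta,i}}$ by the standard interpolation estimate for the $P_1$--NC space, using the $H^2$ regularity granted by \assref{ass:dirichlet}.

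For the consistency term I would first record the divergence-free analogue of \lemref{lemma:divflux}, which is in fact cleaner under Dirichlet coupling: the defining relation $a^{K_{\delta,i}}(\varphi^j,z)=0$ now holds for all $z\in H^1_0(K_{\delta,i})$, so testing against $C_c^\infty(K_{\delta,i})$ and using the $H^1$ regularity of the flux $\mbA^\vep\grad\varphi^j$ from \assref{ass:dirichlet} gives $\div(\mbA^\vep\grad\varphi^j)=0$ a.e.\ in $K_{\delta,i}$. Rewriting $L(w_h)$ through $\varphi^j=\psi^j+x_j$ and integrating by parts element by element then annihilates the interior term exactly as before, leaving the single boundary sum $\sum_{K_h}\int_{\p K_h}\bnu_{K_h}\cdot\mbA^\vep\grad\varphi^j\,w_h\,\dsig$.

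Here is the only genuine modification. On each interior micro face the continuity of the flux combines the two element contributions into $\int_\tau\bnu_\tau\cdot\mbA^\vep\grad\varphi^j\,[w_h]_\tau\,\dsig$, and $\langle[w_h]_\tau,1\rangle_\tau=0$ lets me subtract the face average of $w_h$. On a boundary micro face of $K_{\delta,i}$ the periodic pairing of opposite faces is no longer available, but since $w_h\in V^{P_1}_{h,0}(K_{\delta,i})$ now satisfies $\langle w_h,1\rangle_\tau=0$, the very same subtraction of $\avint_\tau w_h$ applies to the one-sided boundary term. In both cases the Bramble--Hilbert lemma together with the trace theorem yield $|\int_\tau\bnu_\tau\cdot\mbA^\vep\grad\varphi^j\,w_h\,\dsig|\le Ch\,|\mbA^\vep\grad\varphi^j|_{1,K_h}\,|w_h|_{1,K_h}$, and summing over faces with Cauchy--Schwarz gives $|L(w_h)|\le Ch\,|\mbA^\vep\grad\varphi^j|_{1,K_{\delta,i}}\,\brknmh{w_h}$.

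It remains to re-establish the micro-regularity bound $|\mbA^\vep\grad\varphi^j|_{1,K_{\delta,i}}\le C\delta^{d/2}\vep^{-1}$ (the Dirichlet analogue of \lemref{lem:micro-regularity}), which is the only step where the new coefficient hypothesis is felt: expanding $\grad(\mbA^\vep\grad\varphi^j)$ by the product rule, the term carrying $\grad\mbA^\vep$ is controlled by $|\grad A^\vep_{jk}|_{0,\infty}\le C/\vep$ times $|\grad\varphi^j|_{0,K_{\delta,i}}=\Oh(\delta^{d/2})$, while the term carrying $D^2\psi^j$ uses $|A^\vep_{jk}|_{0,\infty}\le C$ together with the $H^2$ regularity of $\psi^j$, the factor $\delta^{d/2}$ being the volume scaling of the sampling domain. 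Combining this with the consistency bound and the $\Oh(h)$ approximation estimate delivers $\brknmh{\psi^j-\psi^j_h}\le Ch\,\delta^{d/2}\vep^{-1}$. I expect the real obstacle to be this last estimate, specifically extracting the sharp single power $\vep^{-1}$ from $\|D^2\psi^j\|_{0,K_{\delta,i}}$ rather than a crude $\vep^{-2}$; this forces one to track the explicit $\vep$-dependence in the elliptic regularity for $\psi^j$, using uniform ellipticity jointly with the $W^{1,\infty}$ control of $\mbA^\vep$, whereas the boundary-face bookkeeping, though new, is routine once the zero-mean property of $V^{P_1}_{h,0}$ is invoked.
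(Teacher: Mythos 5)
Your proposal is correct and follows exactly the route the paper intends: the paper itself omits the proof of this proposition, stating only that ``the same arguments as in the case of periodic coupling'' apply, and your reconstruction — Second Strang Lemma, the divergence-free flux property, element-wise integration by parts with the zero-mean face conditions of $V^{P_1}_{h,0}$ replacing the periodic pairing on boundary faces, Bramble--Hilbert plus trace, and the Dirichlet analogue of the micro-regularity bound — is precisely that adaptation. The boundary-face handling via $\< w_h,1\>_\tau=0$ is the right (and only) substantive change, so nothing is missing.
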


\begin{proposition}\label{prop:recovered-tensor-micro-error-diri}
Under \assref{ass:dirichlet}, the following holds:
\begin{align}\label{eq:recovered-tensor-micro-error-diri}
\sup_{K_{\delta,i}} \| \overline{\mbA}^0_{K_{\delta,i}}-\mbA^0_{K_{\delta,i}} \|_2
\le C 
	\Big( \frac{h}{\vep} \Big)^2.
\end{align}
\end{proposition}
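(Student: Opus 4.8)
The plan is to reproduce, almost verbatim, the argument used for \propref{prop:recovered-tensor-micro-error}, since the only places where periodicity entered that proof were through the three auxiliary facts \lemref{lemma:divflux}, \lemref{lem:micro-regularity}, and \propref{prop:basis-micro-error}; under Dirichlet coupling these are replaced by their counterparts, namely the Dirichlet analogues of \lemrefs{lemma:divflux}{lem:micro-regularity} and \propref{prop:basis-micro-error-diri}. First I would invoke \propref{prop:rec-homojk} to write, for each pair $(j,k)$,
\[
\big(A^0_{K_{\delta,i}}\big)_{jk}-\big(\overline{A}^0_{K_{\delta,i}}\big)_{jk}
= \delta^{-d}\sum_{K_h\in\Tau_h}\int_{K_h}\mbA^\vep\grad\varphi^k_h\cdot\grad\varphi^j_h - \mbA^\vep\grad\varphi^k\cdot\grad\varphi^j\dx ,
\]
and split the integrand by inserting $\pm\varphi^k$ and $\pm\varphi^j$, so as to produce a symmetric bulk term in $\grad(\varphi^k_h-\varphi^k)\cdot\grad(\varphi^j_h-\varphi^j)$ together with two cross terms.

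Next I would treat the two cross terms by integration by parts over each micro element $K_h$ and the symmetry of $\mbA^\vep$, producing volume integrals against $\div(\mbA^\vep\grad\varphi^k)$ (resp. $\div(\mbA^\vep\grad\varphi^j)$) plus face integrals $\int_{\p K_h}\bnu_{K_h}\cdot\mbA^\vep\grad\varphi^k\,(\varphi^j_h-\varphi^j)\dsig$. The Dirichlet analogue of \lemref{lemma:divflux} annihilates the volume integrals. For the face integrals I would repeat the consistency estimate from the proof of \propref{prop:basis-micro-error}: the mean of the jump of $\varphi^j_h-\varphi^j$ across each interior micro face vanishes, and its mean on each boundary micro face vanishes as well, both by the definition of $V^{P_1}_{h,0}(K_{\delta,i})$ and since $\psi^j\in H^1_0(K_{\delta,i})$, so one may subtract $\avint_\tau$ on both factors and apply the Bramble--Hilbert lemma together with the trace inequality. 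This bounds each face sum by $C h\,|\mbA^\vep\grad\varphi^k|_{1,K_{\delta,i}}\brknmh{\varphi^j_h-\varphi^j}$, while boundedness of $\mbA^\vep$ controls the bulk term by $\Lambda\,\brknmh{\varphi^k_h-\varphi^k}\brknmh{\varphi^j_h-\varphi^j}$.

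Finally I would insert the two basic micro estimates. Since $\grad(\varphi^j_h-\varphi^j)=\grad(\psi^j_h-\psi^j)$, \propref{prop:basis-micro-error-diri} gives $\brknmh{\varphi^j_h-\varphi^j}\le C h\delta^{d/2}\vep^{-1}$, and the Dirichlet version of \lemref{lem:micro-regularity} gives $|\mbA^\vep\grad\varphi^j|_{1,K_{\delta,i}}\le C\delta^{d/2}\vep^{-1}$. Each of the three terms then carries a factor $\delta^{-d}\cdot\delta^{d/2}\cdot\delta^{d/2}=1$, leaving $C(h/\vep)^2$; taking the supremum over sampling domains yields \eqref{eq:recovered-tensor-micro-error-diri}.

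The one place where the argument is not a literal transcription of the periodic proof, and hence the main obstacle, is establishing the Dirichlet analogues of \lemrefs{lemma:divflux}{lem:micro-regularity}. The flux bound $|\mbA^\vep\grad\varphi^j|_{1,K_{\delta,i}}\le C\delta^{d/2}\vep^{-1}$ must now be generated from the weaker hypothesis $|\grad A^\vep_{jk}|_{0,\infty,K_H}\le C/\vep$ of \assref{ass:dirichlet} together with the $H^2(K_{\delta,i})$ regularity of $\psi^j$, rather than from the $W^{1,\infty}$-periodicity used before; this is precisely where the $\vep^{-1}$ scaling originates, and it is the only genuinely coupling-dependent ingredient of the proof.
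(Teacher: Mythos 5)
Your proposal is correct and follows essentially the same route as the paper, which in fact omits the proof entirely and simply asserts that ``the same arguments as in the case of periodic coupling'' carry over; your reconstruction — \propref{prop:rec-homojk}, the $\pm\varphi^k,\pm\varphi^j$ splitting, elementwise integration by parts, the vanishing face means from $V^{P_1}_{h,0}$ and $\psi^j\in H^1_0$, and the Dirichlet analogues of \lemrefs{lemma:divflux}{lem:micro-regularity} together with \propref{prop:basis-micro-error-diri} — is precisely the intended argument. You also correctly identify the only genuinely new ingredient (the flux regularity bound under \assref{ass:dirichlet}), which the paper likewise leaves implicit.
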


\subsection{A priori error estimate}
The description below is following the analysis framework from \cite{abdulle2012discontinuous},
but adopting the finite element specific results above.
We do not repeat the details of proof in this section.

\subsubsection{The macro error}
Under sufficient regularity of $u^0$, for instance $H^2(\O)$, the standard
analysis for nonconforming finite elements \cite{park-sheen-p1quad} yields
\begin{align}\label{eq:macro-error}
\brknmH{u^{0}-u^0_H} \le C H \|u^{0}\|_2.
\end{align}

\subsubsection{The modeling error}\label{sec:modeling-error}
Since $\ou_H, u^0_H\in V_H$, using the uniform ellipticity of $\oa_H$
\eqref{eq:overlineuH}, and  \eqref{eq:u^0_H}, we have
\begin{align*}
\brknmH{u^0_H-\ou_H}^2 
&\le 
	\oa_{H}(u^0_H-\ou_H, u^0_H-\ou_H) \\
&= 
	\oa_{H}(u^0_H, u^0_H-\ou_H) -a_{H}^0(u^0_H, u^0_H-\ou_H).
\end{align*}
We deduce the following Strang-type inequality:
\begin{align}
\brknmH{u^0_H-\ou_H}
&\le 
	\sup_{w_H \in V_H} 
	\frac{\left| \oa_{H}(u^0_H,w_H)-a_{H}^0(u^0_H,w_H) \right|}{\brknmH{w_H}}.
\end{align}
Recalling \eqref{eq:weakform-conti-bilinear} and \eqref{eq:a^0_H}, and using
\propref{prop:recovered-homo}, we can bound the numerator as follows:
\begin{align*}
&\left| \oa_{H}(u^0_H,w_H)-a_{H}^0(u^0_H,w_H) \right| \\
&= 
	\Big| \sum_{K_H \in \Tau_H} \sum_{i=1}^I \omega_i \ \overline{\mbA}^0_{K_{\delta,i}} \grad {u^0_H} \cdot \grad {w_H}
	- \sum_{K_H \in \Tau_H} \sum_{i=1}^I \omega_i \ \mbA^0(\mbx_i) \grad u^0_H(\mbx_i) \cdot \grad w_H(\mbx_i) \Big| \\
&\le 
	\sum_{K_H \in \Tau_H} \sum_{i=1}^I \omega_i 
	\Big| \big(\overline{\mbA}^0_{K_{\delta,i}} - \mbA^0(\mbx_i) \big)
	\grad {u^0_H} \cdot \grad {w_H} \Big| \\
&\le 
	\sup_{K_{\delta,i}} \|\overline{\mbA}^0_{K_{\delta,i}}-\mbA^0(\mbx_i) \|_2 \ \brknmH{u^0_H} \ \brknmH{w_H}.
\end{align*}
Thus, we have
\begin{align}\label{eq:modeling-error}
\brknmH{u^0_H-\ou_H}
&\le C
	\sup_{K_{\delta,i}}
  \|\overline{\mbA}^0_{K_{\delta,i}}-\mbA^0(\mbx_i) \|_2\, \brknmH{u^0_H}.
\end{align}

As mentioned in \cite{abdulle2012discontinuous}, the following result is obtained in \cite{e2005analysisa}:
If we assume the local periodicity of $\mbA^\vep$, for instance under \assref{ass:periodic}, then 
\begin{align}
\sup_{K_{\delta,i}} \|\mbA^0-\overline{\mbA}^0_{K_{\delta,i}}\|_2
\le
\begin{cases}
C \vep &\text{if periodic coupling with } \delta/\vep \in \mathbb{N} \text{ is used for }\eqref{eq:micro-problem}, \\
C \left( \frac{\vep}{\delta} + \delta \right) &\text{if Dirichlet coupling or } \delta/\vep \not\in \mathbb{N} \text{ is used}.
\end{cases}
\end{align}

\subsubsection{The micro error}
Due to the uniform ellipticity of $a_H$, we have
\begin{align*}
\brknmH{\ou_H-u_H}^2 
&\le 
	a_{H}(\ou_H-u_H, \ou_H-u_H) \\
&= 
	a_{H}(\ou_H, \ou_H-u_H) - \oa_{H}(\ou_H, \ou_H-u_H).
\end{align*}
Therefore it holds
\begin{align}
\brknmH{\ou_H-u_H}
&\le 
	\sup_{w_H \in V_H} \frac{\left| a_{H}(\ou_H,w_H)-\oa_{H}(\ou_H,w_H) \right|}{\brknmH{w_H}}.
\end{align}
Again, recalling \eqref{eq:weakform-conti-bilinear} and
\eqref{eq:a^0_H}, we see that
an application of \proprefs{prop:recovered-homo}{prop:recovered-tensor-micro-error}
to the numerator gives the following bound:
\begin{align*}
&\left| a_{H}(\ou_H,w_H)-\oa_{H}(\ou_H,w_H) \right| \\
&= 
	\Big| \sum_{K_H \in \Tau_H} \sum_{i=1}^I \omega_i \
	\mbA^0_{K_{\delta,i}} \grad {\ou_H} \cdot \grad {w_H}
	- \sum_{K_H \in \Tau_H} \sum_{i=1}^I \omega_i \
	\overline{\mbA}^0_{K_{\delta,i}} \grad {\ou_H} \cdot \grad {w_H} \Big| \\
&\le 
	\sum_{K_H \in \Tau_H} \sum_{i=1}^I \omega_i  
	\Big| \big(\mbA^0_{K_{\delta,i}} - \overline{\mbA}^0_{K_{\delta,i}} \big)
	\grad {\ou_H} \cdot \grad {w_H} \Big| \\
&\le 
	\sup_{K_{\delta,i}} \|\mbA^0_{K_{\delta,i}}-\overline{\mbA}^0_{K_{\delta,i}} \|_2 \ \brknmH{\ou_H} \ \brknmH{w_H}
\le C
	\Big( \frac{h}{\vep} \Big)^2 \brknmH{\ou_H} \ \brknmH{w_H},
\end{align*}
and we have
\begin{align}\label{eq:micro-error}
\brknmH{\ou_H-u_H}
&\le C	\Big( \frac{h}{\vep} \Big)^2  \brknmH{\ou_H}.
\end{align}
	
\subsubsection{Main theorem for error estimates}
The usual Aubin-Nitsche duality argument provides $L^2$-norm error
estimates, see, for instance, \cite{e2005analysisa}. The derivation is
standard, omitted here,
but we list the estimates:
\begin{theorem}\label{thm:main}
Let $u^0$ and $u_H$ be the solutions of \eqref{eq:u0} and
\eqref{eq:uH}. Then, the followings hold under various assumptions: 
\begin{enumerate}
\item Under \assref{ass:periodic}, if periodic coupling with $\delta/\vep \in \mathbb{N}$ is used, then
\begin{subequations}\label{eq:main-periodic}
\begin{align}
\brknmH{u^0 - u_H}
\le C
	\Big( H + \vep + \big(\frac{h}{\vep}\big)^2 \Big), \\
\| u^0 - u_H \|_0
\le C
	\Big( H^2 + \vep + \big(\frac{h}{\vep}\big)^2 \Big);
\end{align}
\end{subequations}

\item Under \assref{ass:periodic}, 
if periodic coupling with $\delta/\vep \not \in \mathbb{N}$ is used, then
\begin{subequations}\label{eq:main-periodic-noninteger}
\begin{align}
\brknmH{u^0 - u_H}
\le C
	\Big( H + \big(\frac{\vep}{\delta} + \delta\big) + \big(\frac{h}{\vep}\big)^2 \Big), \\
\| u^0 - u_H \|_0
\le C
	\Big( H^2 + \big(\frac{\vep}{\delta} + \delta\big) +
        \big(\frac{h}{\vep}\big)^2 \Big);
\end{align}
\end{subequations}

\item Under \assref{ass:periodic} and \assref{ass:dirichlet},
if Dirichlet coupling is used, then
\begin{subequations}\label{eq:main-periodic-dirichlet}
\begin{align}
\brknmH{u^0 - u_H}
\le C
	\Big( H + \big(\frac{\vep}{\delta} + \delta\big) + \big(\frac{h}{\vep}\big)^2 \Big), \\
\| u^0 - u_H \|_0
\le C
	\Big( H^2 + \big(\frac{\vep}{\delta} + \delta\big) + \big(\frac{h}{\vep}\big)^2 \Big);
\end{align}
\end{subequations}

\item
Under \assref{ass:dirichlet}, if Dirichlet coupling is used, then
\begin{subequations}\label{eq:main-dirichlet}
\begin{align}
\brknmH{u^0 - u_H}
\le C
	\Big( H + \sup_{K_{\delta,i}} \|\overline{\mbA}^0_{K_{\delta,i}}-\mbA^0(\mbx_i) \|_2 + \big(\frac{h}{\vep}\big)^2 \Big), \\
\| u^0 - u_H \|_0
\le C
	\Big( H^2 + \sup_{K_{\delta,i}} \|\overline{\mbA}^0_{K_{\delta,i}}-\mbA^0(\mbx_i) \|_2 + \big(\frac{h}{\vep}\big)^2 \Big).
\end{align}
\end{subequations}
\end{enumerate}
\end{theorem}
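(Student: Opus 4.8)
The plan is to bound the total error $u^0 - u_H$ by splitting it, via the triangle inequality, into the three constituent errors already estimated above and then assembling them. For the energy norm I would write
\begin{align*}
\brknmH{u^0 - u_H}
\le
\brknmH{u^0 - u^0_H}
+ \brknmH{u^0_H - \ou_H}
+ \brknmH{\ou_H - u_H},
\end{align*}
so that the first summand is the macro error controlled by \eqref{eq:macro-error}, the second is the modeling error controlled by \eqref{eq:modeling-error}, and the third is the micro error controlled by \eqref{eq:micro-error}.

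Before substituting, I would dispose of the stability factors $\brknmH{u^0_H}$ and $\brknmH{\ou_H}$ appearing on the right-hand sides of \eqref{eq:modeling-error} and \eqref{eq:micro-error}. Since $u^0_H$ and $\ou_H$ solve the well-posed discrete problems \eqref{eq:u^0_H} and \eqref{eq:overlineuH}, whose bilinear forms are uniformly coercive with right-hand side the fixed datum $f$, both broken norms are bounded by $C\|f\|_0$ uniformly in $H, h, \vep, \delta$. Hence the modeling term reduces to $C\sup_{K_{\delta,i}} \|\overline{\mbA}^0_{K_{\delta,i}} - \mbA^0(\mbx_i)\|_2$ and the micro term to $C(h/\vep)^2$, with constants independent of the discretization parameters.

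The four cases then arise simply by inserting the appropriate bound for the tensor discrepancy. For the periodic couplings I would invoke the estimate of \cite{e2005analysisa} recalled above, namely $\|\mbA^0 - \overline{\mbA}^0_{K_{\delta,i}}\|_2 \le C\vep$ when $\delta/\vep \in \mathbb{N}$ and $C(\vep/\delta + \delta)$ otherwise (the Dirichlet-coupling case falling under the latter bound); for the general Dirichlet case of item (4) no such structural estimate is available, so the discrepancy is retained in the symbolic form $\sup_{K_{\delta,i}} \|\overline{\mbA}^0_{K_{\delta,i}} - \mbA^0(\mbx_i)\|_2$. Collecting the macro, modeling, and micro contributions produces each of the stated energy-norm bounds.

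Finally, the $L^2$ estimates follow from the standard Aubin--Nitsche duality argument: one introduces the solution of the dual homogenized problem, uses its $H^2$-regularity, and pairs it against the error. This upgrades the macro contribution from $\Oh(H)$ to $\Oh(H^2)$, while the modeling and micro contributions keep the same order, since they are consistency-type discrepancies of the bilinear forms rather than approximation errors. The main obstacle I anticipate lies precisely here: one must verify, within the duality argument, that the additional consistency terms --- stemming both from the nonconformity of $V_H$ and from the replacement of $\mbA^0(\mbx_i)$ by the recovered tensors $\overline{\mbA}^0_{K_{\delta,i}}$ and $\mbA^0_{K_{\delta,i}}$ --- are controlled by a product of two broken energy norms carrying a factor already of order $\Oh((h/\vep)^2)$ or $\Oh(\|\mbA^0 - \overline{\mbA}^0_{K_{\delta,i}}\|_2)$, so that they enter at the modeling/micro order and do not erode the gained power of $H$.
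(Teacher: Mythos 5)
Your proposal follows essentially the same route as the paper: the paper assembles the theorem from exactly the three estimates you cite --- the macro error \eqref{eq:macro-error}, the modeling error \eqref{eq:modeling-error} combined with the recalled bounds on $\|\mbA^0-\overline{\mbA}^0_{K_{\delta,i}}\|_2$ from \cite{e2005analysisa}, and the micro error \eqref{eq:micro-error} --- via the triangle inequality, and then invokes the standard Aubin--Nitsche duality argument (whose details it likewise omits, deferring to \cite{e2005analysisa}) for the $L^2$ bounds. Your additional remarks on uniformly bounding the stability factors $\brknmH{u^0_H}$ and $\brknmH{\ou_H}$ and on the consistency terms in the duality argument are precisely the routine verifications the paper leaves implicit.
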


\section{Numerical results}\label{sec:numerical-result}

In this section, we have numerical tests for homogeneous finite element scheme.
We need to pay attention to solving the micro problems since it may treats problems with periodic boundary condition.
If the periodic coupling condition is adopted for the micro problems,
the corresponding algebraic system of equations for each micro problem must be constructed
in order to fulfill two important properties, namely,
the periodicity and the zero-integral properties.
Technically, these properties
can be enforced either in the discrete function space or in the formulation for the problem.

In \cite{abdulle2009short}
these two properties are imposed at the level of the problem formulation,
by using a Lagrange multiplier and a constraint matrix.
This approach is quite simple to implement, but the resulting linear
systems are saddle point problems. It requires an additional
consideration to apply an efficient iterative
method for such indefinite problems, and thus a direct method is often
employed as in \cite{abdulle2009short}.

Alternative approaches for the $P_1$--nonconforming quadrilateral finite
element \cite{park-sheen-p1quad} are proposed in \cite{yim-sheen-p1nc-per}.
To handle elliptic problems with periodic boundary condition,
they introduce a subspace of the lowest order nonconforming
quadrilateral finite element which is required to have the periodicity
and iterative-based numerical schemes which deals the zero--integral.
Those approaches assemble semi-definite matrix equations
without additional columns for constraints.
The semi-definiteness is advantageous for the numerical schemes since it derives faster convergence.
Theoretical results of the numerical schemes for the semi-definite matrix equations
show a priori error estimate of the numerical solutions.


In our numerical examples,
we employ one of alternative approaches from \cite{yim-sheen-p1nc-per} to solve micro problems.
We take $\mathfrak{E}^\flat$, a linearly independent basis for
$V^{P_1}_{h,\#}$, as the trial and test function space
and assemble the corresponding linear system
without the zero--integral condition.
The corresponding stiffness matrix is a symmetric positive semi-definite system
whose rank deficiency is 1.
For details about the numerical implementation, see Section 5.2 in \cite{yim-sheen-p1nc-per}.
We will investigate efficiency of the alternative approach for micro problems in \secref{subsec:time-comparison}.
It should be mentioned that we use the 2-point Gauss-Legendre quadrature formula for each coordinate in all numerical examples.

\subsection{Periodic diagonal example}\label{subsec:ex-peri-diag}
The first example is a multiscale elliptic problem
\eqref{eq:multiscale-elliptic} in $\O=(0,1)^2$ whose
coefficient is anisotropically periodic in micro scale: 
$$
\mbA^\vep({\mbx}) =
\begin{pmatrix}
\sqrt{2}+\sin(2 \pi {x_1}/{\vep}) & 0 \\ 0 & \sqrt{2}+\sin(2 \pi {x_2}/{\vep})
\end{pmatrix},\quad
\vep=10^{-3}.
$$
By the homogenization theory, it can be easily shown that the associated homogenized tensor $\mbA^0$ is equal to $I$, the identity tensor.
$f(\mbx)$ is set to satisfy that the associated homogenized elliptic problem has the exact solution $u^0({\mbx}) = \sin(\pi x_1) \sin(\pi x_2)$.
For the sake of simplicity we use the macro and the micro mesh consisting of uniform squares. 
The size parameter $\delta$ of each sampling domain is set to be same as $\vep$. We use periodic coupling for micro problems.

\figref{fig:ex-peri-diag} shows error in energy norm, and in $L^2$-norm,
and the difference between two observable homogenized tensors $\mbA^0$ and $\mbA^0_{K_{\delta,i}}$
in the Frobenius norm.
Note that the matrix $2-$norm for a finite dimensional matrix is equivalent to the Frobenius norm.
The theoretical error estimates \eqref{eq:main-periodic} depend on $H$ as well as $h$.
We can observe that the error is decreasing as $H$ is decreasing,
but there is a critical $H$ value where the error does not decrease anymore for fixed $h$,
as particularly in $L^2$-norm.
Furthermore, in order to observe convergence rate as in \eqref{eq:main-periodic}
we have to consider simultaneous reduction of $H$ and $h$ in different orders,
since the theorem shows the dependency on $H$ and $h$ with their own convergence orders.
For instance, simultaneous reduction of $H$ in the second-order and $h$ in the first-order gives convergence order of 2 in energy norm,
as observed in the table.
The error in $L^2$-norm is similar.
The numerical results confirm \eqref{eq:main-periodic} in \thmref{thm:main}, the main convergence result for periodic cases.

\begin{figure}[!t]
\begin{minipage}{0.6\textwidth}
  \footnotesize
  \centering
  \resizebox{\textwidth}{!}{
  \begin{tabular}{c | c  c  c  c  c }
    \hline \hline
    $H$ & $h/\vep$=1/4 & 1/8 & 1/16 & 1/32 & 1/64 \\ \hline 
    \multicolumn{1}{c}{} & \multicolumn{5}{c}{$\brknmH{u^0-u_H}$} \\ \hline
    1/2 & 1.33E-00 & 1.35E-00 & 1.36E-00 & 1.36E-00 & 1.36E-00 \\
    1/4 & 6.98E-01 & 6.99E-01 & 7.03E-01 & 7.04E-01 & 7.05E-01 \\
    1/8 & 3.75E-01 & 3.55E-01 & 3.54E-01 & 3.55E-01 & 3.55E-01 \\
   1/16 & 2.77E-01 & 1.84E-01 & 1.78E-01 & 1.78E-01 & 1.78E-01 \\
   1/32 & 1.54E-01 & 1.04E-01 & 8.98E-02 & 8.90E-02 & 8.90E-02 \\
   1/64 & 1.93E-01 & 6.79E-02 & 4.66E-02 & 4.46E-02 & 4.45E-02 \\ \hline
    \multicolumn{1}{c}{} & \multicolumn{5}{c}{$\|u^0-u_H\|_0$} \\ \hline
    1/2 & 1.21E-01 & 1.20E-01 & 1.20E-01 & 1.21E-01 & 1.21E-01 \\
    1/4 & 4.58E-02 & 3.22E-02 & 3.04E-02 & 3.04E-02 & 3.04E-02 \\
    1/8 & 3.50E-02 & 1.41E-02 & 8.21E-03 & 7.64E-03 & 7.60E-03 \\
   1/16 & 4.96E-02 & 1.20E-02 & 3.69E-03 & 2.06E-03 & 1.91E-03 \\
   1/32 & 2.88E-02 & 1.25E-02 & 3.20E-03 & 9.31E-04 & 5.16E-04 \\
   1/64 & 4.23E-02 & 1.16E-02 & 3.17E-03 & 8.09E-04 & 2.33E-04 \\ \hline
    \multicolumn{1}{c}{} & \multicolumn{5}{c}{$ \sup_{K_{\delta,i}} \|\mbA^0 - \mbA^0_{K_{\delta,i}} \|_F$} \\ \hline
    1/2 & 1.00E-01 & 3.47E-02 & 9.02E-03 & 2.27E-03 & 5.68E-04 \\
    1/4 & 1.07E-01 & 3.42E-02 & 9.02E-03 & 2.27E-03 & 5.68E-04 \\
    1/8 & 1.04E-01 & 3.44E-02 & 9.02E-03 & 2.27E-03 & 5.68E-04 \\
   1/16 & 1.56E-01 & 3.43E-02 & 9.02E-03 & 2.27E-03 & 5.68E-04 \\
   1/32 & 8.65E-02 & 3.62E-02 & 9.02E-03 & 2.27E-03 & 5.68E-04 \\
   1/64 & 1.44E-01 & 3.37E-02 & 9.02E-03 & 2.27E-03 & 5.68E-04 \\
    \hline \hline
  \end{tabular}
  }
\end{minipage}
\begin{minipage}{0.4\textwidth}
\epsfig{figure=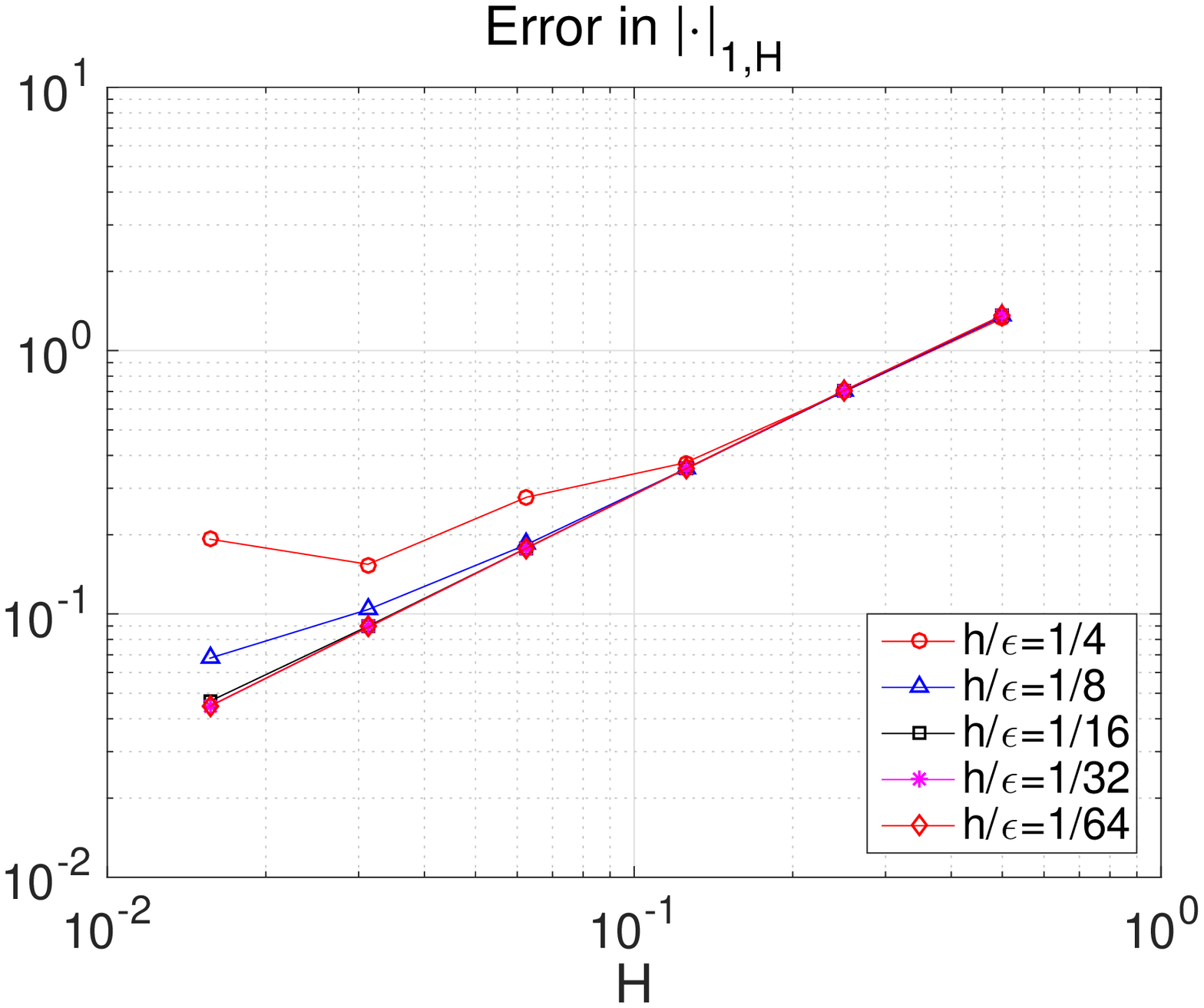,width=\textwidth}
\epsfig{figure=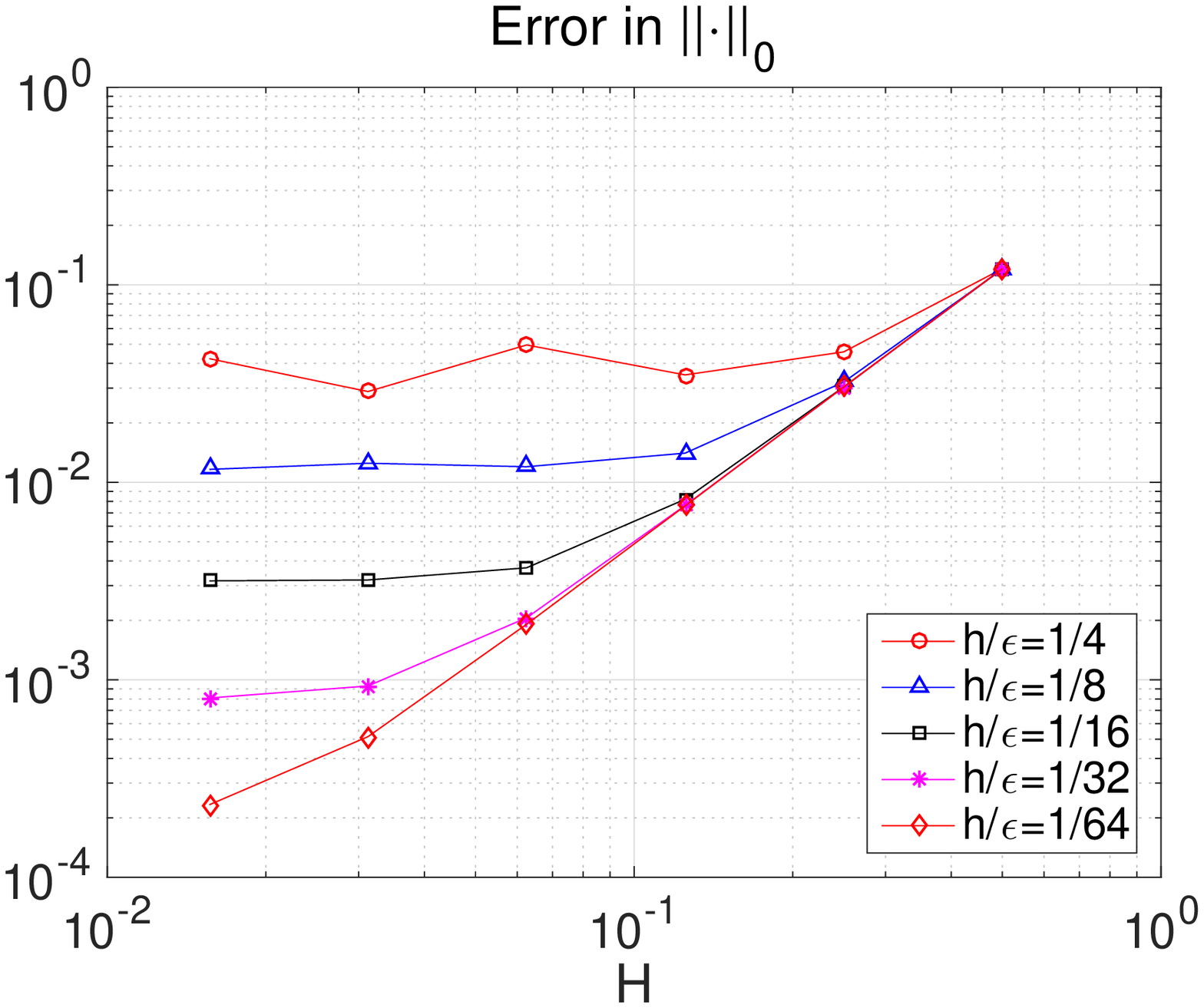,width=\textwidth}
\end{minipage}
\caption{Result of the problem in \secref{subsec:ex-peri-diag}. Errors
  in macro mesh-dependent norm (RT) and  in $L^2$ norm (RB).}
\label{fig:ex-peri-diag}
\end{figure}

\subsubsection{Comparison of elapsed time in solving the micro problem}\label{subsec:time-comparison}
As mentioned in the beginning of this section,
we mainly use the alternative iterative approach based on the Conjugate Gradient method (CG) for micro problems with Dirichlet coupling as well as periodic coupling condition.
Here we investigate the efficiency of the alternative iterative approach over the direct solver
for the periodic coupling case.

We consider three approaches for implementation of the FEHMM scheme.
They only differ in way for setting and solving linear systems corresponding to micro problems.
We describe these approaches in brief.

The first approach uses the conforming $Q_1$ element to assemble a linear system 
for each micro problem. As mentioned in \cite{abdulle2009short},
the assembled system is indefinite due to blocks for constraints.
The number of rows of the system matrix is equal to $n^2+4n+3$, where $n$ is the number
of discretization in each coordinate of each sampling domain.
A direct solver from LAPACK is used to solve the indefinite system numerically.
We name this approach `Direct-$Q_1$'.

The second approach, denoted by `Direct-$P_1$--NC', assembles a linear system using
the $P_1$--nonconforming quadrilateral element in similar manner as the previous approach.
The only difference between two approaches is kind of used finite elements.
Thus the system matrix in this approach is also indefinite, and has the size of $n^2+4n+2$.
This system is solved by the same direct solver as Direct-$Q_1$.

As shown in \tabref{tab:micro-solver-time}, the use of
nonconforming element is more efficient in computing time at least by
10\% than that of
conforming counterpart. We will concentrate on the comparison of
computation times for the nonconforming element between direct and 
iterative linear solvers.

The last approach, denoted by `Iterative-$P_1$--NC' and mainly used throughout the whole
numerical implementations in this paper, also uses the $P_1$--nonconforming quadrilateral element
but in different manner unlike two previous approaches.
This approach uses a basis for the discrete function space with periodic property,
and assembles a corresponding symmetric positive semi-definite system with rank 1 deficiency.
The zero-integral property is imposed as a post-processing procedure.
The size of the system matrix is $n^2+1$, less than previous, due to the absence of constraint blocks.
We solve this semi-definite system in iterative way, by use of the CG.

For the comparison between three approaches,
we again consider the same multiscale elliptic problem in \secref{subsec:ex-peri-diag}.
Each of three approaches is used to solve micro problems numerically, and
(sum of) the elapsed time for micro solver is measured.

\tabref{tab:micro-solver-time} shows the elapsed time for each approach in various combinations of
macro and micro mesh size.
First we can observe that the approach `Direct-$P_1$--NC' takes slightly less than the approaches `Direct-$Q_1$'.
However the elapsed time of `Iterative-$P_1$--NC' approach is much less than other direct approaches.
We can get constant time ratio of the iterative method over other direct methods as the (macro) mesh parameter $H$ varies,
but strongly depending on the micro mesh parameter $h$.
As the finer micro mesh is used, advantage of the iterative method over the elapsed time increases.
Since solving the micro problems is the most time consuming step in the whole numerical test procedure,
it is promising to use the iterative approach which we have adopted.

\begin{table}[t]
  \footnotesize
  \centering
  \begin{tabular}{l | r r r}
    \hline \hline
    & \multicolumn{3}{c}{$h/\vep=1/32$} \\ \cline{2-4}
    $H$ & Direct-$Q_1$ & Direct-$P_1$--NC & Iter-$P_1$--NC \\ \hline
    1/2  & 6.8   & 4.2  & 1.6   \\
    1/4  & 20.3  & 16.4 & 6.3   \\
    1/8  & 73.9  & 66.8 & 25.8  \\
    1/16 & 288.8 & 260.8& 102.5  \\
    \hline 
  \end{tabular}
\vspace{0.3cm}

  \begin{tabular}{l | r r r}
    \hline \hline
 & \multicolumn{3}{c}{$h/\vep=1/64$}  \\ \cline{2-4}
    $H$  & Direct-$Q_1$ & Direct-$P_1$--NC & Iter-$P_1$--NC \\ \hline
    1/2  &  326.0 & 295.5 & 12.8 \\
    1/4  &  1303.3 & 1164.1 & 52.1 \\
    1/8  &  5147.1 & 5143.5 & 213.9 \\
    1/16 &  20943.3 & 18845.1 & 845.5 \\
    \hline 
  \end{tabular}
  \caption{Elapsed time (in second) for micro solvers.}
  \label{tab:micro-solver-time}
\end{table}

\subsection{Periodic example with off-diagonal terms}\label{subsec:ex-peri-off-diag}
In this example we take a tensor whose components are all nonzero with single directional periodicity. 
For $\vep=10^{-3}$,
consider the problem \eqref{eq:multiscale-elliptic} with a multiscale tensor
$$
\mbA^\vep({\mbx}) =
\begin{pmatrix}
\sqrt{2} + \sin(2 \pi {x_1}/{\vep}) &
\frac{1}{2} + \frac{1}{2\sqrt{2}}\sin(2 \pi {x_1}/{\vep})
\\ 
\frac{1}{2} + \frac{1}{2\sqrt{2}}\sin(2 \pi {x_1}/{\vep}) & 
2 + \sin(2 \pi {x_1}/{\vep})
\end{pmatrix},
$$
and the associated homogenized tensor
$
\mbA^0({\mbx}) =
\begin{pmatrix}
1 &
\frac{1}{2\sqrt{2}}
\\ 
\frac{1}{2\sqrt{2}} &
\frac{17-\sqrt{2}}{8}
\end{pmatrix}.
$
We set $f({\mbx})$ to satisfy that the exact homogenized solution $u^0({\mbx}) = \sin(\pi x_1) \sin(\pi x_2)$. 
As the previous example, we use the macro and the micro mesh consisting of uniform squares, 
and $\delta = \vep$ with periodic coupling for each micro problem.

\figref{fig:ex-peri-off-diag} shows that similar results can be obtained in more general periodic case.

\begin{figure}[t]
\begin{minipage}{0.6\textwidth}
  \footnotesize
  \centering
  \resizebox{\textwidth}{!}{
  \begin{tabular}{c | c  c  c  c  c }
    \hline \hline
    $H$ & $h/\vep$=1/4 & 1/8 & 1/16 & 1/32 & 1/64 \\ \hline 
    \multicolumn{1}{c}{} & \multicolumn{5}{c}{$\brknmH{u^0-u_H}$} \\ \hline
    1/2 & 1.35E-00 & 1.36E-00 & 1.36E-00 & 1.36E-00 & 1.36E-00 \\
    1/4 & 6.98E-01 & 7.02E-01 & 7.04E-01 & 7.05E-01 & 7.05E-01 \\
    1/8 & 3.56E-01 & 3.54E-01 & 3.55E-01 & 3.55E-01 & 3.55E-01 \\
   1/16 & 1.96E-01 & 1.78E-01 & 1.78E-01 & 1.78E-01 & 1.78E-01 \\
   1/32 & 1.01E-01 & 9.12E-02 & 8.91E-02 & 8.90E-02 & 8.90E-02 \\
   1/64 & 8.72E-02 & 4.86E-02 & 4.48E-02 & 4.45E-02 & 4.45E-02 \\ \hline
    \multicolumn{1}{c}{} & \multicolumn{5}{c}{$\|u^0-u_H\|_0$} \\ \hline
    1/2 & 1.20E-01 & 1.20E-01 & 1.21E-01 & 1.21E-01 & 1.21E-01 \\
    1/4 & 3.30E-02 & 3.06E-02 & 3.04E-02 & 3.04E-02 & 3.04E-02 \\
    1/8 & 1.54E-02 & 8.83E-03 & 7.68E-03 & 7.60E-03 & 7.60E-03 \\
   1/16 & 2.00E-02 & 4.91E-03 & 2.25E-03 & 1.92E-03 & 1.90E-03 \\
   1/32 & 1.13E-02 & 4.80E-03 & 1.29E-03 & 5.63E-04 & 4.81E-04 \\
   1/64 & 1.68E-02 & 4.45E-03 & 1.21E-03 & 3.25E-04 & 1.41E-04 \\ \hline
    \multicolumn{1}{c}{} & \multicolumn{5}{c}{$ \sup_{K_{\delta,i}} \|\mbA^0 - \mbA^0_{K_{\delta,i}} \|_F$} \\ \hline
    1/2 & 7.99E-02 & 2.76E-02 & 7.17E-03 & 1.80E-03 & 4.52E-04 \\
    1/4 & 8.54E-02 & 2.72E-02 & 7.17E-03 & 1.80E-03 & 4.52E-04 \\
    1/8 & 8.26E-02 & 2.74E-02 & 7.17E-03 & 1.80E-03 & 4.52E-04 \\
   1/16 & 1.24E-01 & 2.73E-02 & 7.17E-03 & 1.80E-03 & 4.52E-04 \\
   1/32 & 6.88E-02 & 2.88E-02 & 7.17E-03 & 1.80E-03 & 4.52E-04 \\
   1/64 & 1.15E-01 & 2.68E-02 & 7.18E-03 & 1.80E-03 & 4.52E-04 \\
    \hline \hline
  \end{tabular}
  }
\end{minipage}
\begin{minipage}{0.4\textwidth}
\epsfig{figure=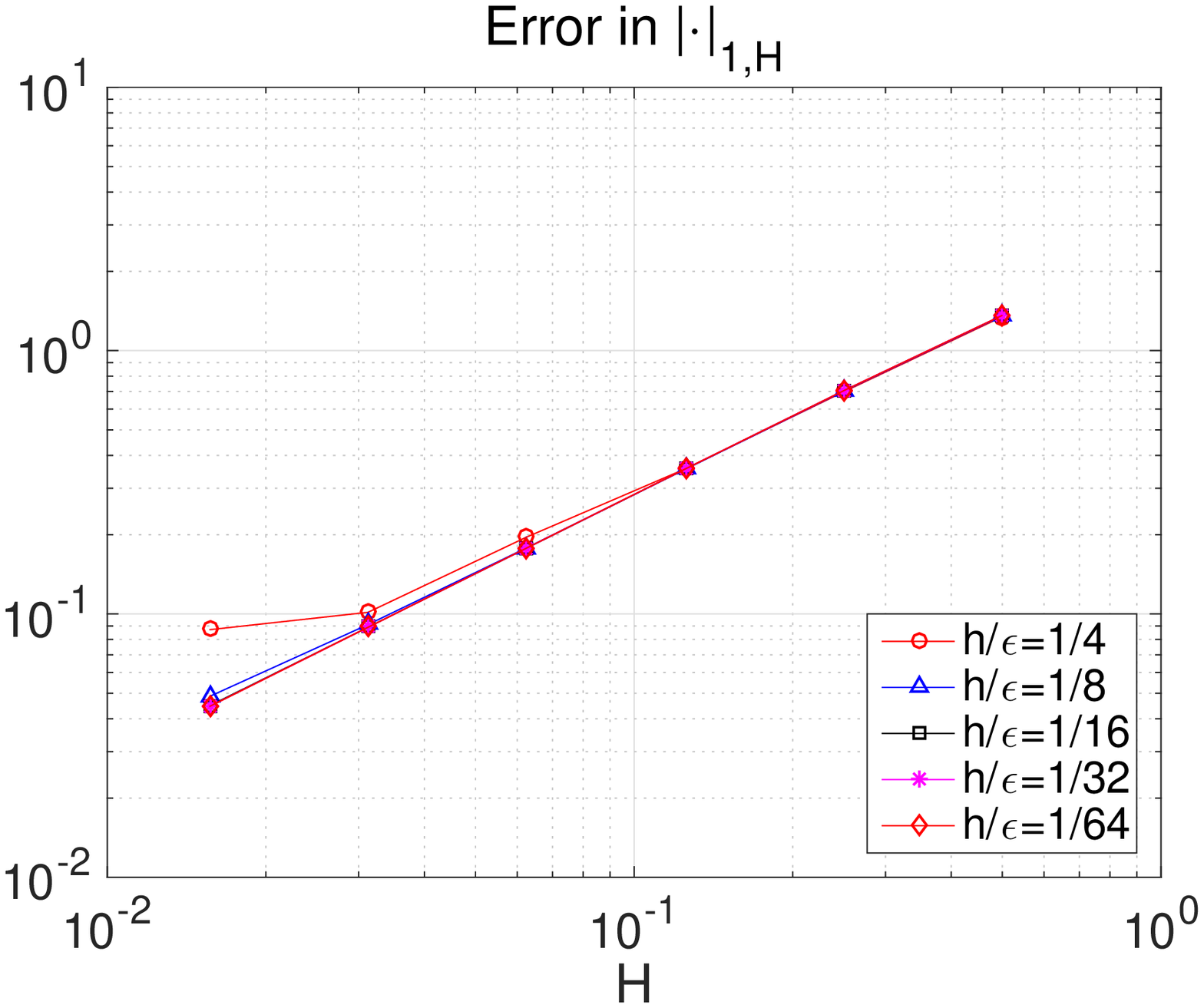,width=\textwidth}
\epsfig{figure=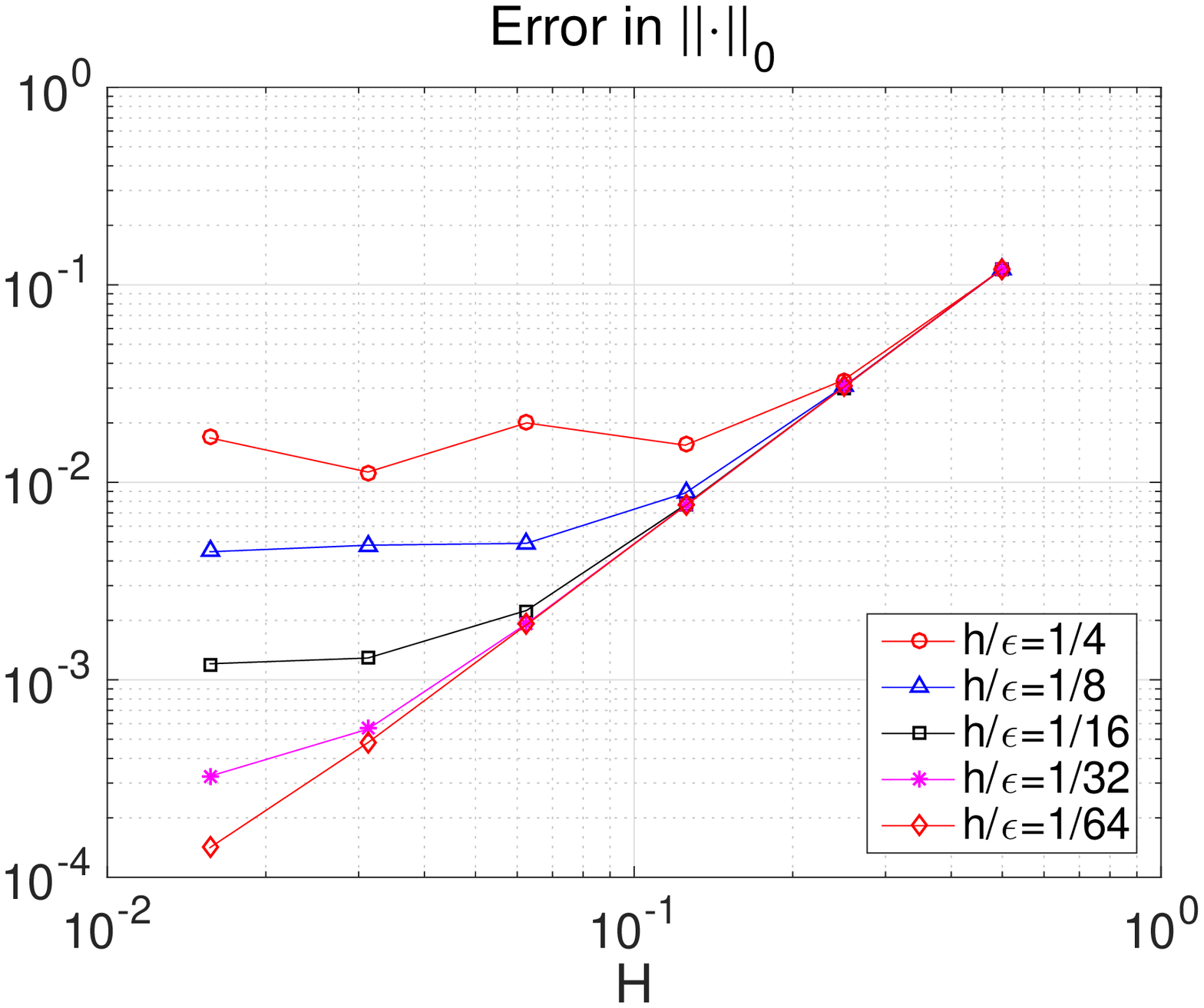,width=\textwidth}
\end{minipage}
\caption{Result of the problem in \secref{subsec:ex-peri-off-diag}.
  Errors in macro mesh-dependent norm (RT) and in $L^2$ norm (RB).}
\label{fig:ex-peri-off-diag}
\end{figure}

\subsection{Example with noninteger-$\vep$-multiple sampling domain and Dirichlet coupling}\label{subsec:ex-diri-noninteger-multiple}
This example, which is originated from \cite{abdulle2009finite}, is to investigate the effect of Dirichlet coupling on micro problems. 
Consider the multiscale elliptic problem with mixed boundary condition
\begin{align*}
-\div \Big( \mbA^{\vep}({\mbx}) \grad u^{\vep}({\mbx}) \Big) &= f({\mbx})
\quad \mbox{in } \O=(0,1)^2, \\
\left.u^{\vep}\right|_{\G_D} &= 0, \\
\left. \nu \cdot \mbA^{\vep} \grad u^{\vep}\right|_{\G_N} &= 0,
\end{align*}
where $\G_D=\{(x_1,x_2) ~|~ x_1=0 \mbox{ or } 1\} \cap \p \O$ 
and $\G_N = \p \O \setminus \G_D$.
We use the multiscale coefficient tensor
$
\mbA^\vep({\mbx}) 
=
\big(2+\cos (2\pi x_1/\vep)\big) I
$
where $\vep = 10^{-3}$, 
the associated homogenized tensor
$
\mbA^0({\mbx}) =
diag(\sqrt{3}, 2)
$,
and $f\equiv 1$ which admits the exact homogenized solution
$
u^0({\mbx}) = -\frac{1}{2\sqrt{3}}x_1(x_1-1)
$.

We use Dirichlet coupling on each micro problem. 
We have three options for sampling domain size $\delta$ which are not multiple of $\vep$; $\delta = 1.1\vep$, $3.1\vep$ and $\sqrt{\vep}$. 
The last option is deduced from \eqref{eq:main-periodic-dirichlet} for the optimal convergence. 
For each macro element 128 micro elements are used, a
sufficiently large number, in order to guarantee that the micro error \eqref{eq:micro-error} can not disrupt the tendency of the total error, except the case of $\sqrt{\vep}$ with 512.

We can observe that error varies depending on size of sampling domains. As shown in \figref{fig:ex-diri-noninteger-multiple},
the bigger size of sampling domains gives the more accurate results.

\begin{figure}[t]
\begin{minipage}{0.6\textwidth}
  \footnotesize
  \centering
  \begin{tabular}{c | c c c }
    \hline \hline
    $H$ & $\delta= 1.1\vep$ (Diri.) & $3.1\vep$ (Diri.) & $\sqrt{\vep}$ (Diri.) \\ \hline 
	\multicolumn{1}{c}{} & \multicolumn{3}{c}{$\brknmH{u^0-u_H}$} \\ \hline
	1/2 & 8.41E-02 & 8.34E-02 & 8.33E-02 \\
	1/4 & 4.22E-02 & 4.17E-02 & 4.17E-02 \\
	1/8 & 2.51E-02 & 2.14E-02 & 2.09E-02 \\
	1/16& 1.50E-02 & 1.11E-02 & 1.04E-02 \\
	1/32& 1.14E-02 & 6.33E-03 & 5.28E-03 \\ \hline
    \multicolumn{1}{c}{} & \multicolumn{3}{c}{$\|u^0-u_H\|_0$} \\ \hline
	1/2 & 1.60E-02 & 1.41E-02 & 1.34E-02 \\
	1/4 & 5.07E-03 & 3.91E-03 & 3.33E-03 \\
	1/8 & 5.11E-03 & 2.29E-03 & 1.20E-03 \\
    1/16 & 3.56E-03 & 1.38E-03 & 3.57E-04 \\
    1/32 & 2.84E-03 & 1.03E-03 & 2.39E-04 \\ \hline
	\multicolumn{1}{c}{} & \multicolumn{3}{c}{$ \sup_{K_{\delta,i}} \|\mbA^0 - \mbA^0_{K_{\delta,i}} \|_F$} \\ \hline
	1/2 & 1.59E-01 & 5.34E-02 & 1.16E-02 \\
	1/4 & 8.45E-02 & 2.97E-02 & 4.82E-03 \\
	1/8 & 1.78E-01 & 6.01E-02 & 1.64E-02 \\
	1/16& 1.42E-01 & 4.79E-02 & 8.22E-03 \\
	1/32& 1.74E-01 & 5.88E-02 & 1.55E-02 \\
    \hline \hline
  \end{tabular}
\end{minipage}
\begin{minipage}{0.35\textwidth}
\epsfig{figure=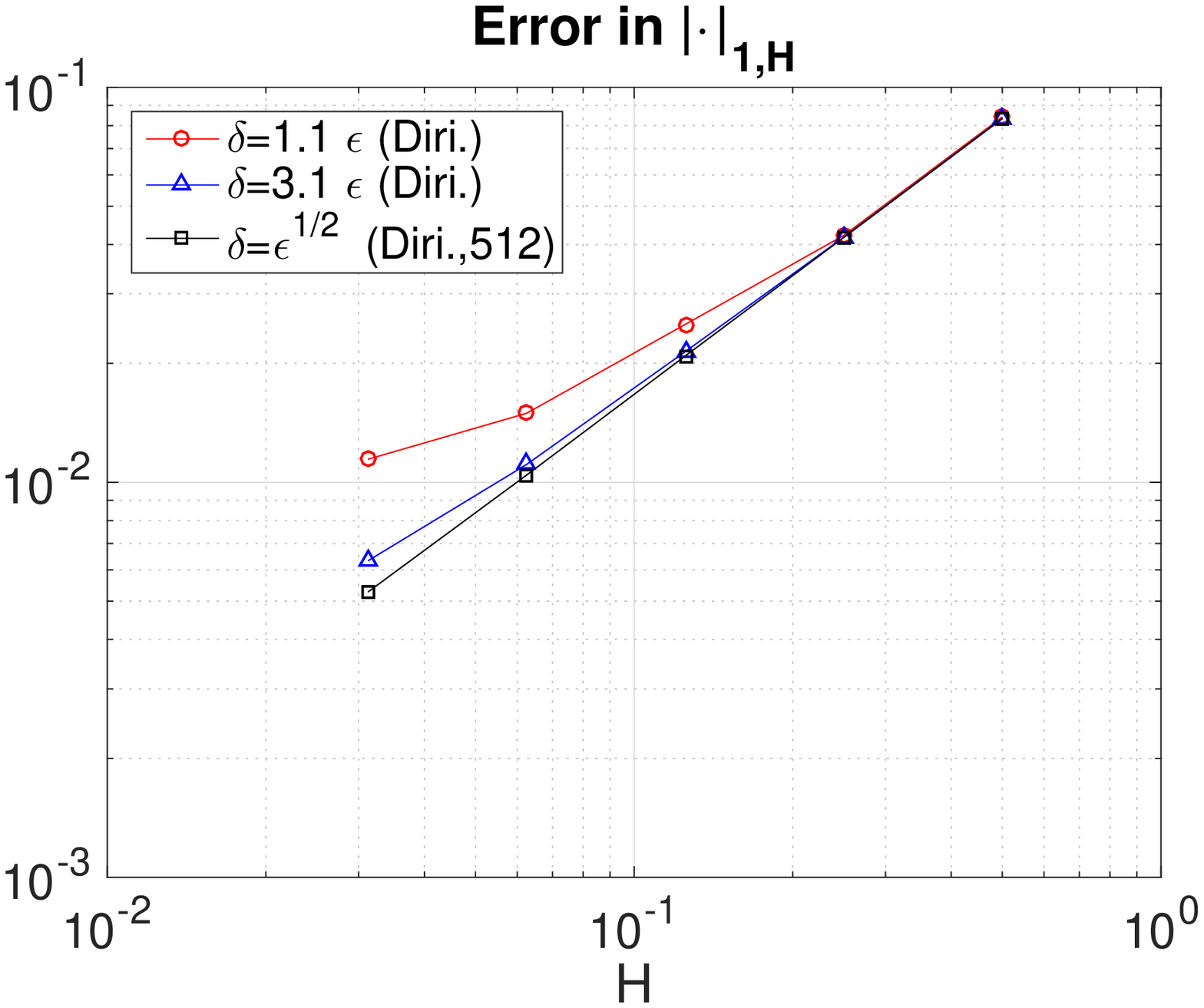,width=\textwidth}
\epsfig{figure=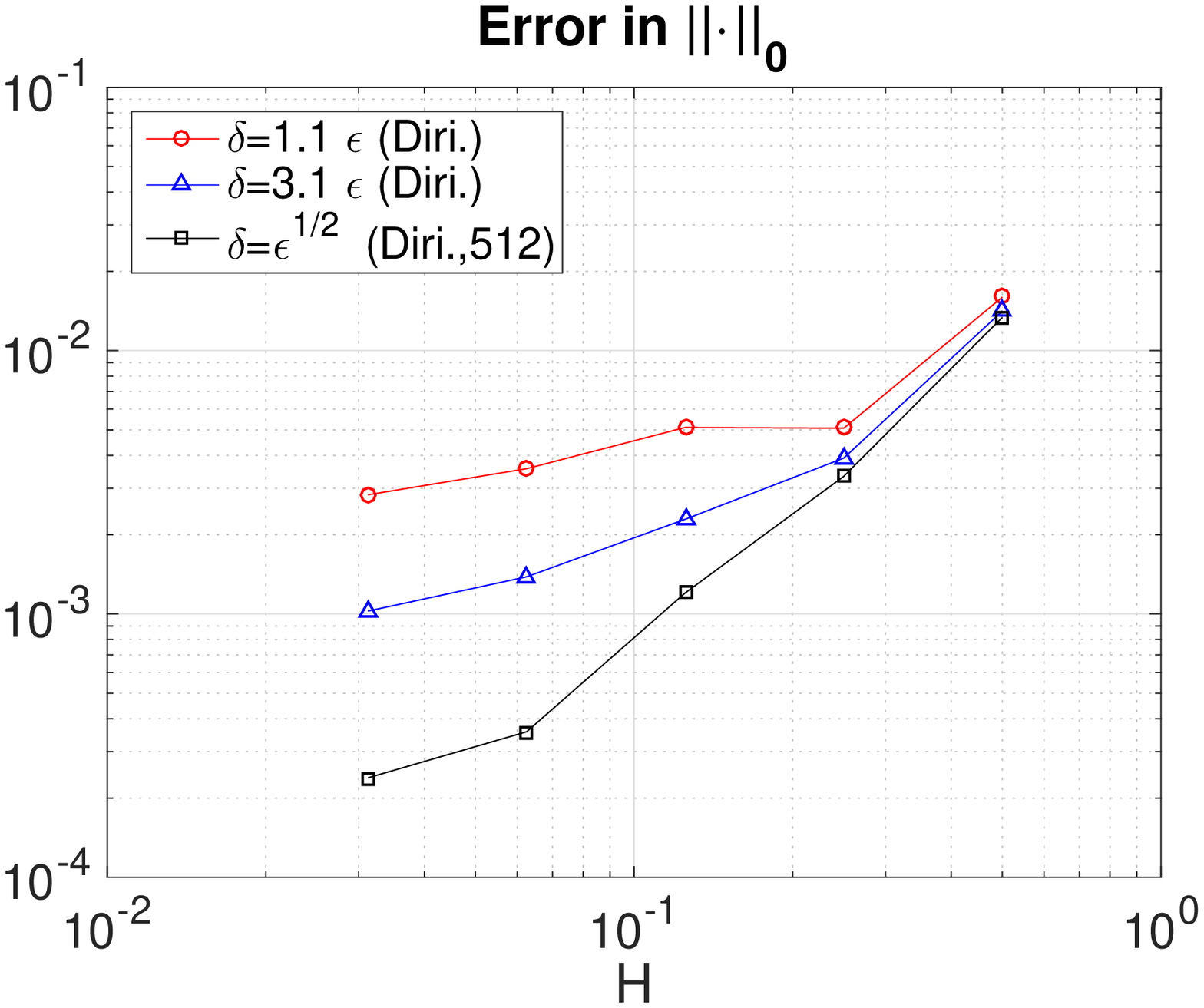,width=\textwidth}
\end{minipage}
 \caption{Result of the problem in \secref{subsec:ex-diri-noninteger-multiple}, $\delta= 1.1\vep$, $3.1\vep$, $\sqrt{\vep}$. (RT) Error in macro mesh-dependent norm and (RB) in $L^2$ norm.}
 \label{fig:ex-diri-noninteger-multiple}
\end{figure}

\subsection{An example on a mixed domain}\label{subsec:ex-mixed-domain}
The last example is a problem on a domain which consists of distinct coefficients.
Let $\O=(0,1)^2$ be decomposed of the two disjoint subdomains
$
\O_1 = \big\{(x_1,x_2) \in \O ~\big|~ x_1 > 0.5 \text{ and } x_2 < 0.5 \big\}
$
and
$
\O_2 = \O \setminus \overline{\O_1}.
$
We consider the second--order elliptic problem with the coefficient tensor
\begin{align*}
\mbA^\vep({\mbx}) =
\begin{pmatrix}
1.1+ \delta_{k,1}\sin(2 \pi {x_1}/{\vep}) & 0 \\ 
0 & 1.1+ \delta_{k,1}\sin(2 \pi {x_1}/{\vep})
\end{pmatrix}
\quad
\text{if}
\quad
\mbx \in \O_k
\end{align*}
where $\vep = 10^{-3}$, and the right hand side $f\equiv 0$. Here $\delta_{ij}$ denotes the standard Kronecker delta.
We impose the homogeneous Neumann boundary condition on the upper and lower boundary,
and the inhomogeneous Dirichlet boundary condition
with values $1$ and $0$ on on the left and right boundaries, respectively.
Any mesh used in this example consists of uniform squares. 
We applied the periodic coupling for micro problems with $\delta = \vep$.
By using the associated homogenized tensor
\begin{align*}
\mbA^0({\mbx}) =
\begin{cases}
\begin{pmatrix}
\sqrt{0.21} & 0 \\ 0 & 1.1
\end{pmatrix} &\qquad \mbox{for } {\mbx} \in \O_1, \\
\begin{pmatrix}
1.1 & 0 \\ 0 & 1.1
\end{pmatrix} &\qquad \mbox{for } {\mbx} \in \O_2,
\end{cases}
\end{align*}
the reference solution $u^0_{ref}$ on $512 \times 512$ mesh is obtained.

Contour plots of the solutions are drawn in \figref{fig:ex-mixed-domain} for comparison.
The plot on top left is for the FEM solution $u^\vep_{ref}$, and
the top right plot is for the FEM solution $u^0_{ref}$ of the homogenized problem.
Both solutions are obtained on $512 \times 512$ uniform square mesh.
The plot on bottom left is for the FEHMM solution $u_H$
from the macro mesh with $8\times 8$ uniform squares,
and the micro mesh with $16\times 16$ uniform squares.
The contour plots show the resemblance of
the FEHMM solution to the solution of the homogenized problem
as well as the solution of the original multiscale problem.
The table shows error of FEHMM solutions to the reference solution in energy norm, and in $L^2$-norm.
We can observe the reduction of error due to decreasing $H$ and $h$,
but not as much as the purely periodic case.

\begin{figure}[t]
 \epsfig{figure=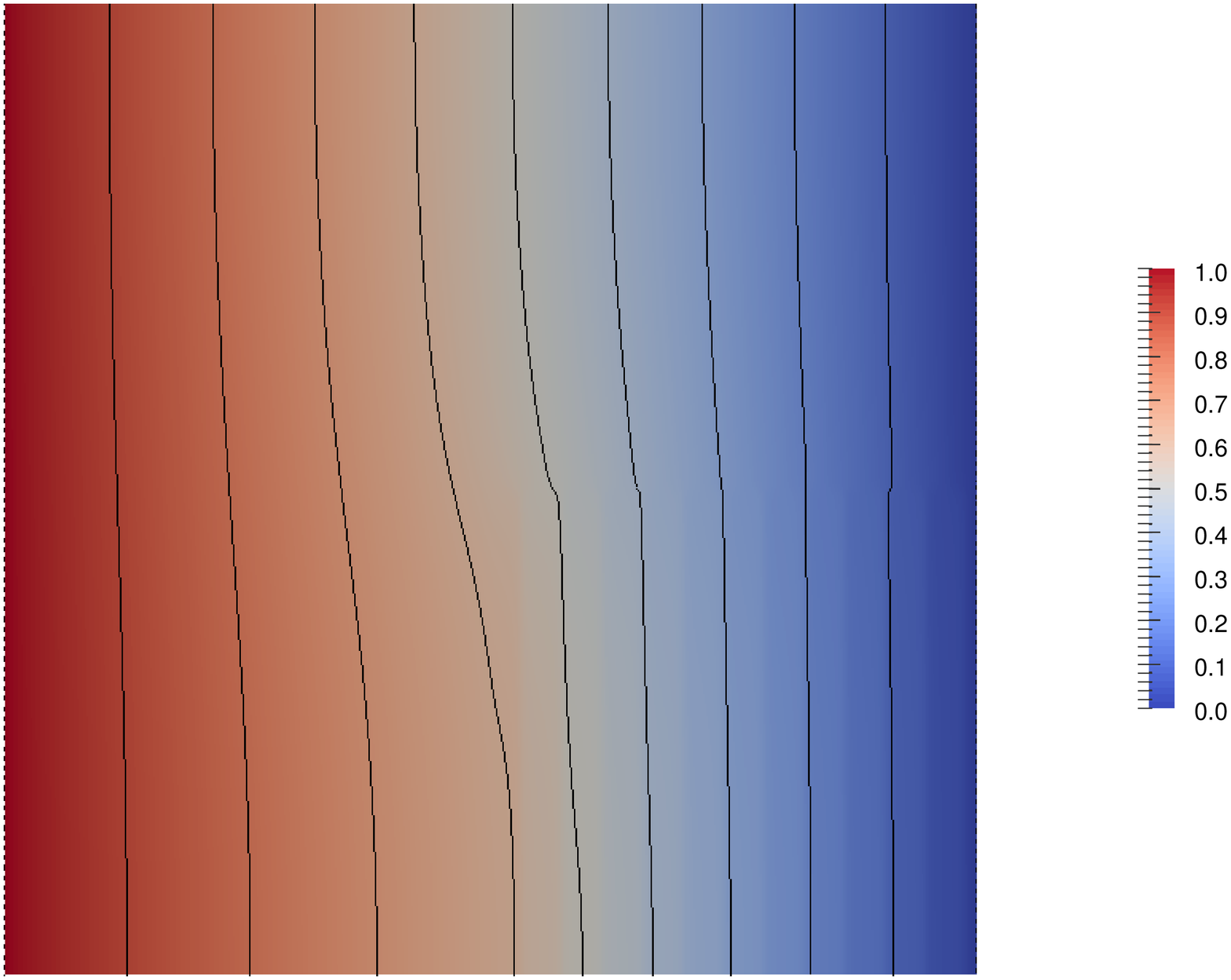,width=0.5\textwidth}
 \epsfig{figure=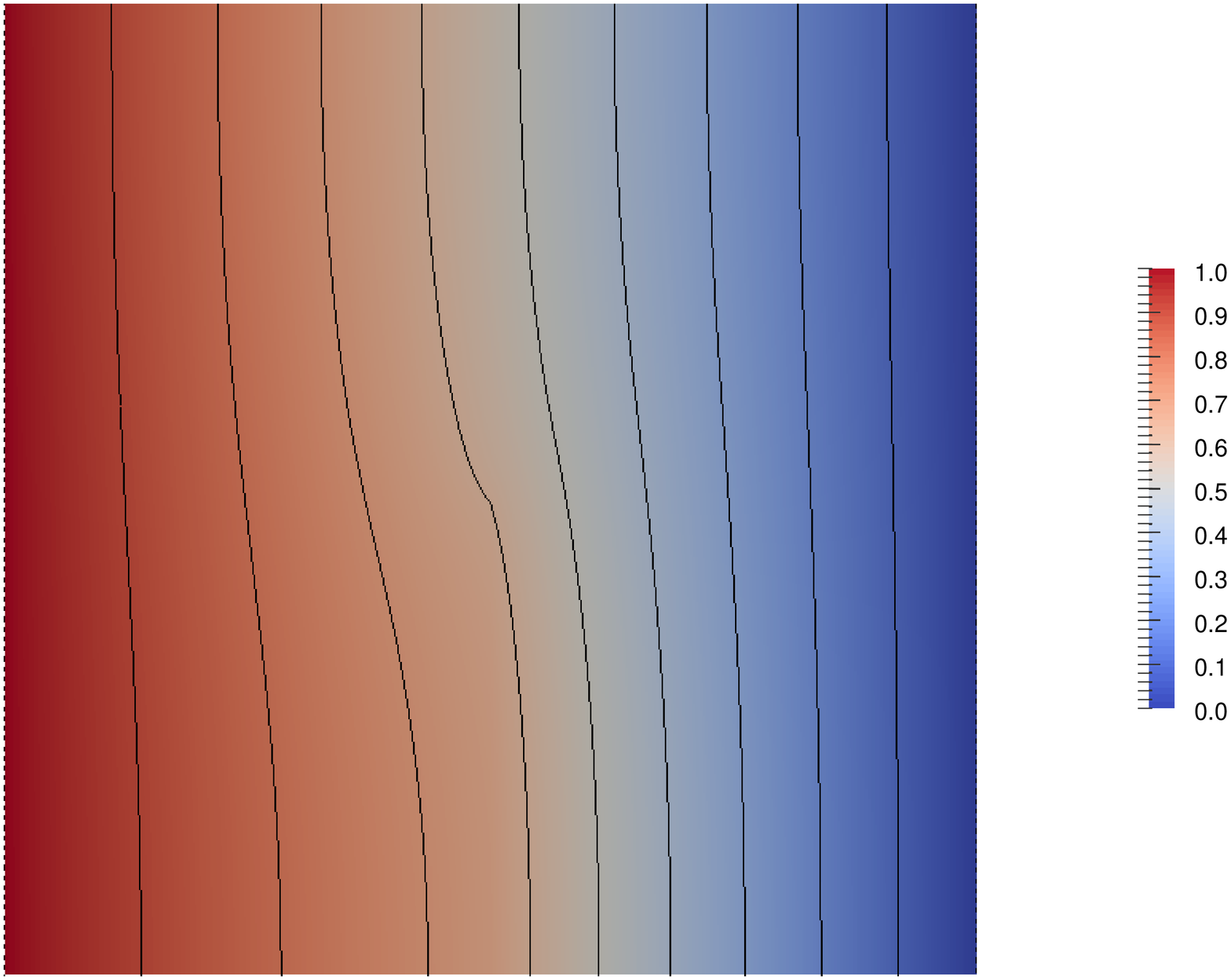,width=0.5\textwidth} \\
 \epsfig{figure=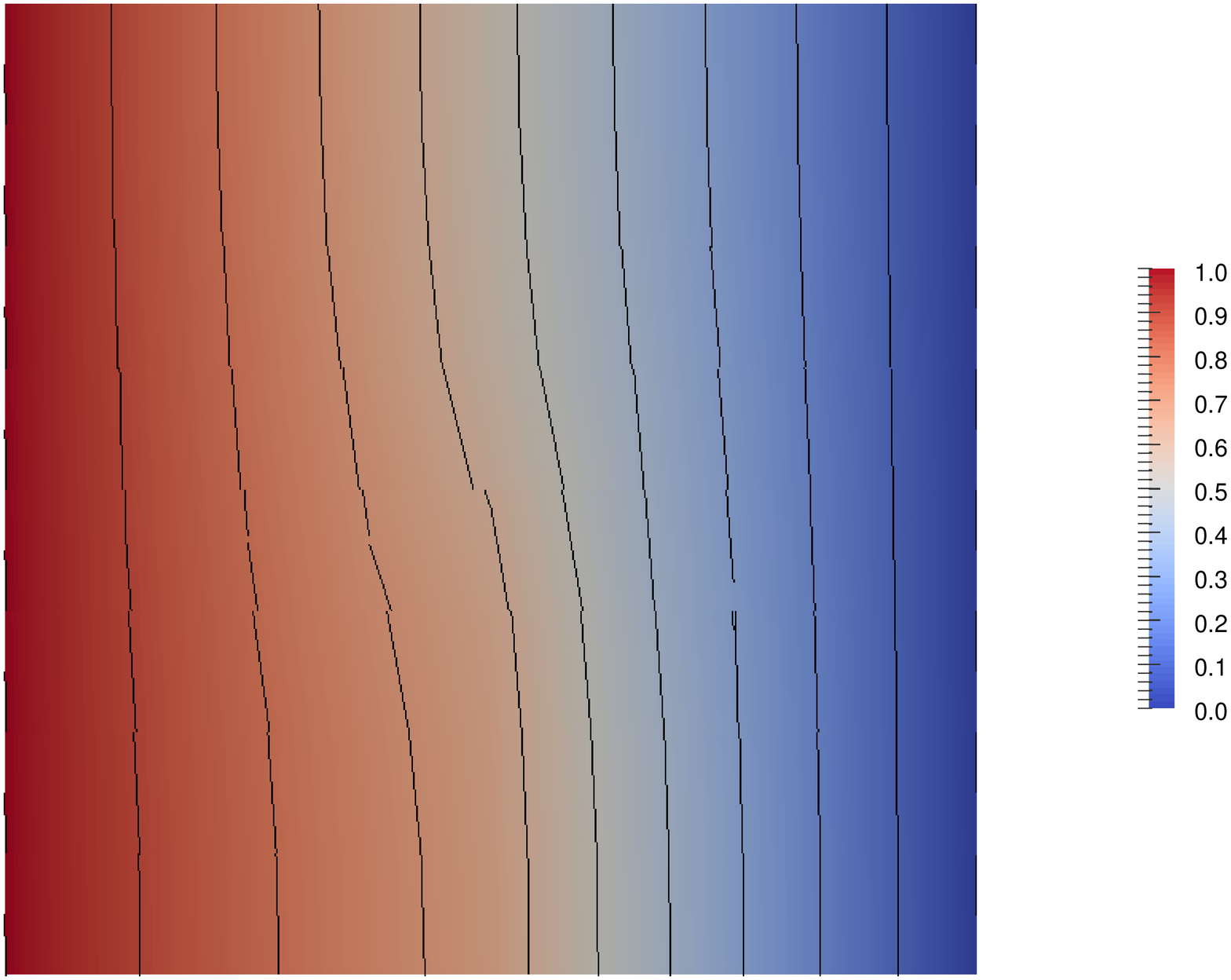,width=0.5\textwidth}
   \qquad
   \footnotesize
   \begin{tabular}[b]{c | c  c  c }
    \hline \hline
    $H$ & $h/\vep$=1/16 & 1/32 & 1/64 \\ \hline 
    \multicolumn{1}{c}{} & \multicolumn{3}{c}{$\|u^0_{ref}-u_H\|_0$} \\ \hline
    1/2 & 9.45E-03 & 9.84E-03 & 9.97E-03 \\
    1/4 & 2.86E-03 & 2.83E-03 & 2.90E-03 \\
    1/8 & 1.53E-03 & 8.31E-04 & 8.20E-04 \\
    1/16& 1.48E-03 & 4.11E-04 & 2.32E-04 \\
    1/32& 1.50E-03 & 3.86E-04 & 1.06E-04 \\
    1/64& 1.58E-03 & 3.91E-04 & 9.73E-05 \\ \hline
    \multicolumn{1}{c}{} & \multicolumn{3}{c}{$\brknmH{u^0_{ref}-u_H}$} \\ \hline
    1/2 & 9.02E-02 & 9.07E-02 & 9.09E-02 \\
    1/4 & 5.32E-02 & 5.34E-02 & 5.35E-02 \\
    1/8 & 3.07E-02 & 3.04E-02 & 3.04E-02 \\
    1/16& 1.78E-02 & 1.69E-02 & 1.69E-02 \\
    1/32& 1.11E-02 & 9.32E-03 & 9.21E-03 \\
    1/64& 8.31E-03 & 5.21E-03 & 4.97E-03 \\
    \hline \hline
  \end{tabular}
\caption{Result of the problem in \secref{subsec:ex-mixed-domain}.
  FEM solution $u^\vep_{ref}$ (TL) and FEM solution $u^0_{ref}$ on
  $512 \times 512$ uniform squares (TR). FEHMM solution $u_H$ using
  $8\times 8$ uniform squares as the macro mesh and $16\times 16$
  uniform squares as the micro mesh (BL).}
\label{fig:ex-mixed-domain}
\end{figure}

\bigskip
\section*{Acknowledgments}
DS was supported in part by
NRF-2017R1A2B3012506 and NRF-2015M3C4A7065662.
\bigskip
\def\cprime{$'$}

\end{document}

\bibliographystyle{abbrv}
\bibliography{ms}